\newtheorem{theorem}{Theorem}[section]
\newtheorem{corollary}[theorem]{Corollary}
\newtheorem{example}[theorem]{Example}
\newtheorem{proposition}[theorem]{Proposition}
\newtheorem{remark}[theorem]{Remark}
\numberwithin{equation}{section} \textwidth=15.5cm
\begin{document}
\title[Singular $\phi$-Laplacian]{Potential systems with singular $\phi$-Laplacian}
\author{Petru Jebelean}
\address{Institute for Advanced Environmental Research, West University of Timi\c{s}oara,
Blvd. V. P\^{a}rvan, no. 4, 300223  Timi\c{s}oara, Romania,
E-mail: petru.jebelean@e-uvt.ro}

\begin{abstract} We are concerned with solvability of the boundary value problem
$$-\left[ \phi(u^{\prime}) \right] ^{\prime}=\nabla_u F(t,u), \quad
\left ( \phi \left( u^{\prime }\right)(0), -\phi \left( u^{\prime }\right)(T)\right )\in \partial j(u(0), u(T)),$$
where $\phi$ is a homeomorphism from $B_a$ -- the open ball of radius $a$ centered at $0_{\mathbb{R}^N},$ onto  $\mathbb{R}^N$, satisfying $\phi(0_{\mathbb{R}^N})=0_{\mathbb{R}^N}$, $\phi =\nabla \Phi$, with $\Phi: \overline{B}_a \to (-\infty, 0]$ of class $C^1$ on $B_a$, continuous and strictly convex on $\overline{B}_a.$ The potential $F:[0,T] \times \mathbb{R}^N \to \mathbb{R}$ is of class $C^1$ with respect to the second variable and $j:\mathbb{R}^N \times \mathbb{R}^N \rightarrow (-\infty, +\infty]$ is proper,  convex and lower semicontinuous. We first provide a variational formulation in the frame of critical point theory for convex, lower semicontinuous perturbations of $C^1$-functionals. Then, taking the advantage of this key step, we obtain existence of minimum energy as well as saddle-point solutions of the problem. Some concrete illustrative examples of applications are provided.

\end{abstract}

\maketitle

\noindent Mathematics Subject Classification: 34B15, 34L30, 47J30, 47N20

\noindent Keywords and phrases: Singular $\phi$-Laplacian; Maximal monotone operator; A priori estimate; Subdifferential; Critical point; Palais-Smale condition; Saddle-point theorem
\bigskip
\section{Introduction}

\noindent In this paper we deal with solvability of a system having the form
\begin{equation}
-\left[ \phi(u^{\prime}) \right] ^{\prime}=\nabla_u F(t,u) \quad \mbox{in }[0,T],  \label{sysp}
\end{equation}
associated with the potential multivalued boundary condition
\begin{equation}\label{bcsysp}
 \left ( \phi \left( u^{\prime }\right)(0), -\phi \left( u^{\prime }\right)(T)\right )\in \partial j(u(0), u(T)),
\end{equation}
under the following hypotheses:
$$
\begin{array}{clrc}
(H_{\Phi})  & \phi \, \, \mbox{\sl is a homeomorphism from } B_a \, \, \mbox{\sl onto } \mathbb{R}^N, \, \, \mbox{\sl such that }\phi(0_{\mathbb{R}^N})=0_{\mathbb{R}^N}, \, \phi =\nabla \Phi, \, \, \mbox{\sl with }\\
&\Phi: \overline{B}_a \to (-\infty, 0] \, \, \mbox{\sl of class } C^1 \, \mbox{\sl on } B_a, \, \, \mbox{\sl continuous and strictly convex on } \overline{B}_a;
\end{array}
$$
$$
\begin{array}{clrc}
(H_{F})  & F \! = \! F(t,u) \!  : \!  [0,T] \times \mathbb{R}^N \to \mathbb{R} \, \, \mbox{\sl is continuous,  } \nabla_u F \, \, \mbox{\sl exists and  is continuous on the set}   \\
& [0,T] \times \mathbb{R}^N \mbox{\sl and } F(\, \cdot \, , 0_{\mathbb{R}^N})=0;
\end{array}
$$
$$
\begin{array}{clrc}
(H_{j})  & j:\mathbb{R}^N \times \mathbb{R}^N \rightarrow (-\infty, +\infty] \, \, \mbox{\sl is proper,  convex, lower semicontinuous,  }j(0_{\mathbb{R}^N \times \mathbb{R}^N})=0 \\
& \mbox {\sl and }  0_{\mathbb{R}^N \times \mathbb{R}^N}\in \partial j (0_{\mathbb{R}^N \times \mathbb{R}^N}).
\end{array}
$$

\smallskip
\noindent Here, for $0<r<\infty$, $B_r$ denotes the open ball of radius $r$ centered at the origin in $\mathbb{R}^N$ -- considered with the Euclidean norm $| \cdot |,$ and  $\partial j$ stands for the subdifferential of $j$ in the sense of convex analysis \cite{Rock}. As $a$ in ($H_{\Phi}$)  is finite, according to an already usual terminology, the $\phi$-Laplacian operator $u \mapsto \left[ \phi(u^{\prime}) \right] ^{\prime}$ is called "singular".
It is worth to notice that assumption of strict convexity of $\Phi$ in hypothesis $(H_{\Phi})$ ensures that $\phi$ is strictly monotone on $B_a$ while, by the lower semicontinuity assumption on the proper convex function $j$ in ($H_{j}$),  the multivalued operator $\partial j : \mathbb{R}^N \times \mathbb{R}^N \rightarrow 2^{\mathbb{R}^N \times \mathbb{R}^N}$ is maximal monotone \cite{Br, Rock}.

\medskip
The prototype of singular $\phi$-Laplacians is the relativistic operator
 \begin{equation*}
\displaystyle u \mapsto {\mathcal R}u:= \left[ \frac{u^{\prime}}{\sqrt{1-|u^{\prime}|^2}} \right] ^{\prime},
\end{equation*}
which is engendered by
$$\phi(y)=\frac{y}{\sqrt{1- |y|^2}}\quad (y\in B_1)$$
and has the potential $\Phi:\overline{B}_1 \to (-\infty, 0]$,  $\Phi(y)=-\sqrt{1- |y|^2}$. The operator ${\mathcal R}$ occurs in the dynamics of special relativity \cite{berg, LaLi}. So, for $N=3$, it is involved in
equations describing the motion of some mechanical systems when the Newtonian acceleration is replaced by
the relativistic one (with the velocity of light normalized to $1$). Also, a natural extension of ${\mathcal R}$ appears as being the so-called $p$-relativistic operator
\begin{equation*}
\displaystyle u \mapsto {\mathcal R}_pu:= \left[ \frac{|u^{\prime}|^{p-2}u^{\prime}}{(1-|u^{\prime}|^p)^{1-1/p} }\right] ^{\prime} \qquad (p>1)
\end{equation*}
which is another relevant example of singular $\phi$-Laplacian having the structure required by hypothesis $(H_{\Phi})$  \cite{JeMaSe, JeSe1}.

\medskip
By \textit{a solution} of the boundary value problem \eqref{sysp}-\eqref{bcsysp} we mean a function $u\in C^{1}:=C^{1}( \left[ 0,T \right]; \mathbb{R}^N)$ with  $\left\vert u^{\prime}(t)\right\vert <a$
for all $t\in  [0,T]$, such that $\phi(u^{\prime })\in C^{1}$, $(u(0), u(T))\in D\left ( \partial j \right )$ and which satisfies \eqref{sysp} and \eqref{bcsysp}.

\smallskip
Due to the boundedness of the derivative of a solution, it is noteworthy that its values at the end points $0$ and $T$ cannot be distant anyway.  Precisely, if for $0<\sigma < \infty$, we define the diagonal strip-like set
$$D_{\sigma }:=\{ (x,y)\in \mathbb{R}^N \times \mathbb{R}^N \, : \, |x-y|< \sigma  \},$$
and $u$ is a solution of problem \eqref{sysp}-\eqref{bcsysp} then, since
\begin{equation}\label{dirstrip}
\left | u(T)-u(0) \right | \leq \int_0^T\left |u'(t) \right | dt  <T a,
\end{equation}
clearly one has that
\begin{equation}\label{stripsol}
\left ( u(0),u(T) \right )\in D_{Ta} \cap D\left ( \partial j \right ).
\end{equation}

In recent years, much attention has been paid to the study of boundary value problems with singular $\phi$-Laplacian. Mainly, the obtained results concern the solvability and multiplicity of solutions of nonlinear systems or scalar equations subject to several usual boundary conditions, such as Dirichlet, Neumann or periodic,  under various assumptions on the perturbing nonlinearity  (see e.g., \cite{Am-Ar, Ar-Be-To, Ber-Maw_jde, Ber-Maw0, BoDF-SIAM, BoDP, JePr2}). Besides the topological techniques -- such as Leray-Schauder degree, fixed point index or lower and upper solutions, the variational approach proved to be very effective in treatment of this class of problems. Thus, starting from a formulation in terms of variational inequality due to H. Brezis and J. Mawhin \cite{BrMa0} (also see \cite{BeJeMaCV}) in paper \cite{BeJeMaRL} it is shown for the first time that such boundary value problems lend themselves very well to the critical point theory for convex, lower semicontinuous perturbations of $C^1$-functionals  developed by A. Szulkin \cite{Sz}. Further adaptations  of this theory or Hamiltonian formulations, closely related to the special case of the relativistic operator ${\mathcal R},$ enabled the treatment of various systems,  among others concerning Lorentz force equation  \cite{Ar-Be-To, BoDP} or the relativistic Kepler problem \cite{BoDF-SIAM}.

\medskip
On the other hand, the boundary condition \eqref{bcsysp} is a general one, which primarily covers the classical ones. Thus, for instance, denoting by $I_K$ the indicator function of a
nonempty, closed and convex set $K\subset \mathbb{R}^N \times \mathbb{R}^N$, the Dirichlet and Neumann homogeneous boundary
conditions
\begin{equation}\label{dirh}
 \quad u(0)= 0_{\mathbb{R}^N}=u(T) ,
\end{equation}
\begin{equation}\label{neuh}
\phi(u')(0)= 0_{\mathbb{R}^N}=\phi(u')(T)
\end{equation}
are obtained by choosing $j=I_K$ with $K=\{ 0_{\mathbb{R}^N \times \mathbb{R}^N} \}$ and $K=\mathbb{R}^N \times \mathbb{R}^N$, respectively. Also, setting
$$d_N^1:=\left \{ (x,x) \, : \, x\in \mathbb{R}^N\right \}  \quad \mbox{ and } \quad d^2_N:=\left \{ (x,-x) \, : \, x\in \mathbb{R}^N\right \}, $$
the periodic and antiperiodic boundary conditions
\begin{equation}\label{peri}
\quad u(0)-u(T)=0_{\mathbb{R}^N}=\phi(u')(0)-\phi(u')(T)  ,
\end{equation}
\begin{equation}\label{aperi}
u(0)+u(T)= 0_{\mathbb{R}^N}=\phi(u')(0)+\phi(u')(T)
\end{equation}
are obtained with $K=d^1_N$ and  $K=d^2_N$, respectively. More general, following \cite[Section 3.5]{PJ}, let $g:\mathbb{R}^N \times \mathbb{R}^N \to \mathbb{R}$ be a convex and G\^{a}teaux differentiable
function with $g(0_{\mathbb{R}^N \times \mathbb{R}^N})=0$ and $dg(0_{\mathbb{R}^N \times \mathbb{R}^N})=0_{\mathbb{R}^N \times \mathbb{R}^N}$, where $dg$ stands for the differential of $g$. Given a closed and convex set $K\subset \mathbb{R}^N \times \mathbb{R}^N$ with $0_{\mathbb{R}^N \times \mathbb{R}^N} \in K$, let $N_K(z)$ be the normal cone to $K$ at $z\in K$, that is
$$N_K(z)=\left \{\xi \in \mathbb{R}^N \times \mathbb{R}^N \, : \,  \langle \!  \langle  \xi | x-z \rangle \! \rangle \leq 0 \mbox{ for all }x\in K  \right \},$$
where $\langle \!  \langle  \cdot | \cdot \rangle \!  \rangle$ is the usual inner product on the product space $\mathbb{R}^N \times \mathbb{R}^N$. It is well known that $\partial I_K(z)=N_K(z)$ if $z\in K$ and  $\partial I_K(z)=\emptyset$ if $z\notin K$ (see e.g., p. 215-216 in \cite{Rock}). Then, taking $j=g+I_K$, we have that $j$ satisfies $(H_{j})$ and the boundary condition \eqref{bcsysp} reads
\begin{equation}\label{ncon}
\left ( u(0), u(T) \right ) \in K, \qquad \left ( \phi \left( u^{\prime }\right)(0), -\phi \left( u^{\prime }\right)(T)\right ) -dg (u(0),u(T))\in N_K(u(0), u(T)).
\end{equation}

For various possible choices of $g$ and $K$ in \eqref{ncon}, we refer the reader to e.g. \cite{Beh, HaPa, Je0 ,PJ, JeMo1, KaPamw, KoPaPa}. As it can be seen therein, problems with a boundary condition of the type \eqref{bcsysp} have been studied for operators such as the classical Laplacian $u\mapsto u''$, the vector $p$-Laplacian $ u\mapsto \left ( |u'|^{p-2}u' \right )'$ or other monotone differential operators. We also note that earlier works deal
with differential equations with boundary conditions of type \eqref{bcsysp}. In this respect, let us remark that Section 5.2 in \cite{Ba} is devoted to the study of a second-order multivalued equation in a Hilbert space submitted to a two-point boundary condition of type \eqref{bcsysp}, while in \cite{MoPe} higher order scalar differential equations are considered with boundary conditions in terms of a nonlinear maximal monotone mapping. In this view, an important aspect of novelty in our study is that the differential operator engendered by $\phi$  is a singular one. As will be seen, this will induce specific difficulties as well as features that do not occur with the other aforementioned operators. Such a simple aspect is relation \eqref{stripsol} which is satisfied by any solution $u$ of problem \eqref{sysp}-\eqref{bcsysp} -- we refer the reader here, for example, to Remark \ref{remark22} ($ii$) below for a detailing on one of its impact in this regard.

\bigskip
The rest of the paper is organized as follows. In Section \ref{sectiunea2} we consider an auxiliary problem in which the system is simpler than \eqref{sysp} but it is subject to a more general boundary condition -- in the sense that $\partial j$ is replaced by a maximal monotone operator.  Using arguments from the theory of maximal monotone operators, as well as from topological fixed point theory, we show that the auxiliary problem has an unique solution (Theorem \ref{rpmax}). In Section \ref{sectiunea3} we provide a variational formulation of the problem \eqref{sysp}-\eqref{bcsysp} in the frame of the critical point theory for convex, lower semicontinuous perturbations of $C^1$-functionals. Here the existence and uniqueness result from the previous section plays a key role in proving that any critical point of the energy functional is actually a solution of problem \eqref{sysp}-\eqref{bcsysp} (Theorem \ref{critpoint}). Section \ref{sectiunea4} is devoted to solvability of problem \eqref{sysp}-\eqref{bcsysp}. Taking the advantage of the variational formulation from Section \ref{sectiunea3}, we obtain the existence of minimum energy solutions in two main situations. First we show that when an eigenvalue-like constant depending on $D(j)$ is positive, the problem is solvable, without any additional conditions on $F$ (Theorem \ref{l1poz}) - this is a so-called "universal" existence result. Also, we prove the existence of minimum energy solutions when the minimization of the energy functional can be reduced over the set of all functions having a uniformly bounded average (Theorem \ref{minlem}). The corollaries of this result concern nonlinearities $F$ which are periodic  (Corollary \ref{Fperiodic}) or have anti-coerciveness properties (Corollary \ref{talp1}). The complementary cases of coerciveness are also addressed and in these situations saddle-point solutions are obtained (Theorem \ref{talp2}, Theorem \ref{talpcoerc}). Several examples of applications illustrating the general results are provided.

\section{An auxiliary existence and uniqueness result}\label{sectiunea2}

\noindent
On $\mathbb{R}^N$ we consider the usual inner product $\langle \cdot | \cdot \rangle$ engendering the norm $| \cdot |$. The notation $\| \cdot \|$ stands for the norm on $\mathbb{R}^N \times \mathbb{R}^N$ corresponding to the
inner product $\langle \!  \langle  \cdot | \cdot \rangle \!  \rangle$ introduced in the previous section.
 By $\| \cdot \| _{\infty}$ we mean the uniform norm on $C:=C([0,T]; \mathbb{R}^N)$
and the usual norm on $L^p:=L^p([0,T]; \mathbb{R}^N)$ will be denoted by $\| \cdot \|_{L^p}$ ($1\leq p \leq \infty$). The Sobolev space $W^{1, \infty}:=W^{1, \infty}([0,T]; \mathbb{R}^N)$  is considered as being endowed with the norm $\|u\|_{W^{1, \infty}}=\|u\|_{L^{\infty}}+\|u^{\prime}\|_{L^{\infty}}.$

\medskip
Let $h\in C$ be fixed and $\gamma: \mathbb{R}^N \times \mathbb{R}^N \to 2^{\mathbb{R}^N \times \mathbb{R}^N}$ be a monotone operator. We consider the problem
$$
-\left[ \phi(u^{\prime}) \right] ^{\prime}+u = h(t),\qquad \left ( \phi \left( u^{\prime }\right)(0), -\phi \left( u^{\prime }\right)(T)\right )\in \gamma(u(0), u(T)).  \leqno{
[{\mathcal P}_\gamma (h)]
} $$

The notion of \textit{solution} for the problem $[{\mathcal P}_\gamma (h)]$ is perfectly similar to that of the solution for \eqref{sysp}-\eqref{bcsysp} from the previous section, with the difference that instead of $\partial j $ is $\gamma$. Notice that if $u$ is a solution of $[{\mathcal P}_\gamma (h)]$ then necessarily $(u(0), u(T))\in D_{Ta} \cap D\left ( \gamma \right ).$

\begin{proposition}\label{p1} Problem $[{\mathcal P}_\gamma (h)]$ has at most one solution.
\end{proposition}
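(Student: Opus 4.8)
The plan is to use the classical monotonicity (energy) argument for uniqueness. Suppose $u_1,u_2$ are two solutions of $[\mathcal{P}_\gamma(h)]$; in particular $u_1,u_2\in C^1$ with $|u_i'|<a$ on $[0,T]$, $\phi(u_i')\in C^1$, $(u_i(0),u_i(T))\in D(\gamma)$, and both satisfy the equation and the boundary relation. Put $w:=u_1-u_2\in C^1$. Subtracting the two copies of the system gives $-[\phi(u_1')-\phi(u_2')]'+w=0$ on $[0,T]$.

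First I would take the inner product in $\mathbb{R}^N$ of this identity with $w(t)$, integrate over $[0,T]$, and integrate by parts in the leading term (legitimate since $\phi(u_i'),w\in C^1$), obtaining
\[
0 = -\Big[\, \langle \phi(u_1')-\phi(u_2') \mid w \rangle \,\Big]_0^T + \int_0^T \langle \phi(u_1'(t))-\phi(u_2'(t)) \mid u_1'(t)-u_2'(t)\rangle\, dt + \int_0^T |w(t)|^2\, dt .
\]
The middle integral is $\ge 0$: since $\phi=\nabla\Phi$ with $\Phi$ strictly convex on $B_a$, the map $\phi$ is monotone (indeed strictly monotone) on $B_a$. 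The last integral is trivially $\ge 0$.

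It remains to check that the boundary term is $\ge 0$, and this is exactly where the structure of the boundary condition is used. Since $(\phi(u_i')(0),-\phi(u_i')(T))\in\gamma(u_i(0),u_i(T))$ for $i=1,2$, monotonicity of $\gamma$ on $\mathbb{R}^N\times\mathbb{R}^N$ with respect to $\langle\!\langle\cdot\mid\cdot\rangle\!\rangle$, applied to these two points of its graph, reads
\[
\langle \phi(u_1')(0)-\phi(u_2')(0) \mid w(0)\rangle - \langle \phi(u_1')(T)-\phi(u_2')(T) \mid w(T)\rangle \ge 0,
\]
which is precisely $-\big[\langle \phi(u_1')-\phi(u_2') \mid w\rangle\big]_0^T\ge 0$. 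Hence the right-hand side of the displayed identity is a sum of three nonnegative quantities equal to $0$, so in particular $\int_0^T |w|^2\, dt=0$, forcing $w\equiv 0$, i.e. $u_1=u_2$.

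Nothing here is delicate once the identity is set up; the only place that requires attention is the sign bookkeeping at the endpoints — recognizing that the $-1$ ``twist'' in the second slot of the boundary relation is exactly what converts the boundary term produced by integration by parts into the monotonicity inequality for $\gamma$. I note also that strict monotonicity of $\phi$ is not even needed for uniqueness of $[\mathcal{P}_\gamma(h)]$: the zeroth-order term $u$ added to the operator already produces the coercive $\int_0^T|w|^2$ term that annihilates $w$.
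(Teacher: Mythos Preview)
Your proof is correct and essentially identical to the paper's own argument: both subtract the two equations, multiply by $u_1-u_2$, integrate by parts, and use the monotonicity of $\gamma$ to control the boundary term and the monotonicity of $\phi$ (plus the $\|u_1-u_2\|_{L^2}^2$ term) to conclude. Your closing observation that strict monotonicity of $\phi$ is unnecessary is exactly the content of the paper's Remark~\ref{remark1}.
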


\begin{proof}
If $u_1$ and $u_2$ are solutions of $[{\mathcal P}_\gamma (h)]$,  from the monotonicity of $\gamma$ it follows
$$\omega (u_1,u_2) :=\langle \phi(u_1^{\prime})(0) - \phi(u_2^{\prime})(0)\, | \, u_1(0)-u_2(0) \rangle
- \langle \phi(u_1^{\prime})(T) - \phi(u_2^{\prime})(T)\, | \,   u_1(T)-u_2(T) \rangle $$
$$= \! \langle \! \langle \left ( \phi \left( u_1^{\prime }\right)(0), -\phi \left( u_1^{\prime }\right)(T)\right ), \left ( \phi \left( u_2^{\prime }\right)(0), -\phi \left( u_2^{\prime }\right)(T)\right )  |
\left ( u_1(0), u_1(T) \right ) -\left ( u_2(0), u_2(T) \right )\rangle \! \rangle \geq 0.
$$

\medskip
\noindent Then, multiplying the equality
$$-\left [ \phi(u_1^{\prime}) - \phi(u_2^{\prime})\right ] ^{\prime}+u_1-u_2=0$$
by $(u_1-u_2)$ and integrating by parts over $[0,T]$, we get
\begin{equation*}
\omega(u_1, u_2)+ \int_0^T\langle  \phi(u_1^{\prime}) - \phi(u_2^{\prime})\, | \,  u_1^{\prime}-u_2^{\prime} \rangle + \|u_1-u_2\|_{L^2}^2=0,
\end{equation*}
which implies $u_1=u_2$, by the monotonicity of $\phi$.
\end{proof}

\medskip
\begin{remark}\label{remark1}{\em From the previous proof, we observe that  the conclusion of Proposition \ref{p1} still remains valid if the entire hypothesis $(H_{\Phi})$ is replaced by the weaker one: "\textit{$\phi:B_a \to \mathbb{R}^N$ is a monotone mapping}".}
\end{remark}

Next, for arbitrary $x,y\in \mathbb{R}^N$, we need to make use of the Dirichlet and Neumann problems

$$-\left[ \phi(u^{\prime}) \right] ^{\prime}+u = h(t),\qquad u(0)=x, \,  u(T)=y,  \leqno{({\mathcal D}_{x,y})}$$
respectively,
$$-\left[ \phi(u^{\prime}) \right] ^{\prime}+u = h(t),\qquad \phi(u^{\prime})(0)=x, \,  \phi(u^{\prime})(T)=y.  \leqno{({\mathcal N}_{x,y})} $$

\begin{proposition}\label{teu} (i) Problem (${\mathcal D}_{x,y}$) is solvable if and only if
\begin{equation}\label{condex}
|y-x|<  T a.
\end{equation}
If a solution of (${\mathcal D}_{x,y}$) exists, then it is unique.

\medskip
(ii) Problem (${\mathcal N}_{x,y}$) has an unique solution for any $x,y\in \mathbb{R}^N$.
\end{proposition}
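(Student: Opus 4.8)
The plan is to derive uniqueness in both parts from Proposition~\ref{p1}, to observe that the necessity of \eqref{condex} in (i) is immediate, and to prove existence in each case by casting the problem as a fixed point equation in $C$ to which Schauder's theorem applies. Uniqueness is cheap: $({\mathcal D}_{x,y})$ is the instance of $[{\mathcal P}_\gamma (h)]$ with $\gamma$ the (trivially monotone) operator defined only at $(x,y)$ by $\gamma(x,y)=\mathbb{R}^N\times\mathbb{R}^N$, and $({\mathcal N}_{x,y})$ is the instance with the constant monotone operator $\gamma\equiv\{(x,-y)\}$; in both cases Proposition~\ref{p1} gives at most one solution. (Equivalently, for two solutions the term $\omega(u_1,u_2)$ of that proof vanishes because the boundary data agree, and the rest of the argument is unchanged.) For the ``only if'' in (i): if $u$ solves $({\mathcal D}_{x,y})$ then by definition $u\in C^1$ with $|u'(t)|<a$ for every $t\in[0,T]$, hence $\max_{[0,T]}|u'|<a$ and $|y-x|=\bigl|\int_0^T u'(t)\,dt\bigr|\le\int_0^T|u'(t)|\,dt<Ta$.

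For existence in $({\mathcal D}_{x,y})$ when $|y-x|<Ta$, for $v\in C$ I would set $\beta_v(t):=\int_0^t\bigl(v(s)-h(s)\bigr)\,ds$ and $\Psi_v:\mathbb{R}^N\to\mathbb{R}^N$, $\Psi_v(c):=\int_0^T\phi^{-1}\bigl(c+\beta_v(s)\bigr)\,ds$. Since $\phi^{-1}(\mathbb{R}^N)=B_a$ and $s\mapsto\phi^{-1}(c+\beta_v(s))$ is continuous on $[0,T]$, one has $|\Psi_v(c)|<Ta$. The key point is that, conversely, $\Psi_v(c)=z$ is solvable for \emph{every} $z\in B_{Ta}$: writing $\phi^{-1}=\nabla\Phi^*$ with $\Phi^*$ the (finite, convex) Legendre--Fenchel conjugate of $\Phi$, we have $\Psi_v=\nabla\mathcal{G}_v$ where $\mathcal{G}_v(c):=\int_0^T\Phi^*\bigl(c+\beta_v(s)\bigr)\,ds$ is of class $C^1$ and, since $\Phi^*(\zeta)\ge a|\zeta|-\max_{\overline{B}_a}\Phi$, satisfies $\mathcal{G}_v(c)\ge Ta\,|c|-C_v$ with $C_v$ depending only on $\|\beta_v\|_\infty$; hence $c\mapsto\mathcal{G}_v(c)-\langle z\,|\,c\rangle$ is $C^1$ and coercive (as $|z|<Ta$) and attains its minimum at a point where $\Psi_v=z$, the solution being unique because $\Psi_v$ is strictly monotone (strict convexity of $\Phi$ makes $\phi$, hence $\phi^{-1}$, strictly monotone). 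In particular $\mathcal{Q}(v)$, the unique solution of $\Psi_v(c)=y-x$, is well defined, and the uniformity of the coercivity bound in $v$ shows $\mathcal{Q}:C\to\mathbb{R}^N$ is continuous. Then $\mathcal{T}(v)(t):=x+\int_0^t\phi^{-1}\bigl(\mathcal{Q}(v)+\beta_v(s)\bigr)\,ds$ defines a continuous map $C\to C$ with $\mathcal{T}(C)$ bounded in $C^1$ (since $|\mathcal{T}(v)'|<a$ and $\|\mathcal{T}(v)\|_\infty\le|x|+Ta$), hence relatively compact in $C$ by Arzel\`a--Ascoli; Schauder's theorem provides $u=\mathcal{T}(u)$, and one checks at once that $u\in C^1$, $\phi(u')=\mathcal{Q}(u)+\beta_u\in C^1$, $|u'|<a$, $-[\phi(u')]'+u=h$, $u(0)=x$, and $u(T)=x+\Psi_u(\mathcal{Q}(u))=y$.

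For existence in $({\mathcal N}_{x,y})$ the scheme is the same but simpler, since no compatibility condition appears: integrating the equation, a solution must satisfy $\phi(u')(t)=x+\beta_u(t)$ together with the average identity $\int_0^T u=y-x+\int_0^T h$, which determines $u(0)$ outright. Accordingly, for $v\in C$ I would put $w_v:=x+\beta_v$ and $\mathcal{S}(v)(t):=\frac{1}{T}\bigl[y-x+\int_0^T h-\int_0^T\!\!\int_0^\sigma\phi^{-1}(w_v(\tau))\,d\tau\,d\sigma\bigr]+\int_0^t\phi^{-1}(w_v(\tau))\,d\tau$, so that $\int_0^T\mathcal{S}(v)=y-x+\int_0^T h$ for every $v$; then $\mathcal{S}:C\to C$ is continuous with $\mathcal{S}(C)$ bounded in $C^1$ (again $|\mathcal{S}(v)'|<a$), hence compact, and any fixed point $u=\mathcal{S}(u)$ satisfies $u\in C^1$, $\phi(u')=w_u\in C^1$, $|u'|<a$, $-[\phi(u')]'+u=h$, $\phi(u')(0)=x$, and $\phi(u')(T)=x+\int_0^T(u-h)=y$.

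The main obstacle is the claim that $\Psi_v$ maps onto the ball $B_{Ta}$ and not onto all of $\mathbb{R}^N$ — precisely the point where the finiteness of $a$ (the ``singular'' feature of $\phi$) enters, and which forces the restriction \eqref{condex} in (i) — which rests on the coercivity estimate $\Phi^*(\zeta)\ge a|\zeta|-C$, equivalently on the asymptotics $\langle\phi^{-1}(\zeta)\,|\,\zeta/|\zeta|\rangle\to a$ as $|\zeta|\to\infty$. Everything else — continuity and compactness of $\mathcal{T}$ and $\mathcal{S}$, continuous dependence of $\mathcal{Q}$ on $v$, and the verification that a fixed point is a genuine $C^1$-solution — is routine bookkeeping.
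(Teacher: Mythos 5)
Your proposal is correct, and the uniqueness and necessity parts coincide with the paper's: the paper likewise views $({\mathcal D}_{x,y})$ and $({\mathcal N}_{x,y})$ as instances of $[{\mathcal P}_\gamma(h)]$ with the two degenerate monotone operators you describe, invokes Proposition \ref{p1}, and gets $|y-x|<Ta$ from \eqref{dirstrip}. Where you genuinely diverge is the existence part: the paper does not prove it at all, but cites \cite[Corollary 6]{Ber-Maw0} for the Dirichlet case and \cite[Example 1]{Ber-Maw0} for the Neumann case, whereas you reconstruct both results. Your Neumann operator ${\mathcal S}$ is essentially the classical Bereanu--Mawhin fixed point operator, so there the difference is only self-containedness; for the Dirichlet case your treatment of the compatibility equation $\Psi_v(c)=y-x$ via the conjugate $\Phi^*$ (writing $\Psi_v=\nabla{\mathcal G}_v$ and minimizing the coercive functional $c\mapsto{\mathcal G}_v(c)-\langle y-x\,|\,c\rangle$, with uniqueness from strict monotonicity of $\phi^{-1}$) is a nice convex-duality substitute for the argument in the cited reference, and it makes transparent exactly where the full hypothesis $(H_\Phi)$ (i.e.\ $\phi=\nabla\Phi$ with $\Phi$ strictly convex) is used -- consistent with Remark \ref{remark2}. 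The only point stated too tersely is the continuity of ${\mathcal Q}$: "uniformity of the coercivity bound in $v$" by itself is not an argument, but it is easily completed -- from ${\mathcal G}_{v_n}(c_n)-\langle y-x\,|\,c_n\rangle\le{\mathcal G}_{v_n}(0)$ and the bound ${\mathcal G}_{v_n}(c)\ge Ta|c|-C_{v_n}$ with $C_{v_n}$ locally uniformly bounded one gets $\{c_n\}$ bounded, and then locally uniform convergence $\Psi_{v_n}\to\Psi_v$ together with uniqueness of the solution of $\Psi_v(c)=y-x$ forces $c_n\to{\mathcal Q}(v)$. With that filled in, your argument buys a self-contained proof (at the cost of roughly a page), while the paper's version buys brevity by delegating to the literature.
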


\begin{proof} First, observe that $({\mathcal D}_{x,y})$ and (${\mathcal N}_{x,y}$) are problems of type
$[{\mathcal P}_\gamma (h)]$. Indeed, for (${\mathcal D}_{x,y}$) one take $D(\gamma)=\{ (x,y)\}$, $\gamma (x,y)= \mathbb{R}^N \times \mathbb{R}^N$, while
(${\mathcal N}_{x,y}$) is obtained with $D(\gamma)=\mathbb{R}^N \times \mathbb{R}^N$, $\gamma (\xi,\eta)= \{(x,-y)\}$ for all $(\xi, \eta) \in \mathbb{R}^N \times \mathbb{R}^N$.
Thus, the uniqueness part of both $(i)$ and $(ii)$ follows by Proposition \ref{p1}.

\medskip
If \eqref{condex} holds, then (${\mathcal D}_{x,y}$) has a solution by \cite[Corollary 6]{Ber-Maw0}. Reciprocally, if (${\mathcal D}_{x,y}$) has a solution $u$, then $|y-x|<Ta$ follows from \eqref{dirstrip}.
The fact that (${\mathcal N}_{x,y}$) is solvable is a particular case of \cite[Example 1]{Ber-Maw0}.
\end{proof}

\medskip
\begin{remark}\label{remark2}
{\em In fact, as can be seen from \cite[Example 1]{Ber-Maw0} and Remark \ref{remark1}, statement \textit {(ii)} in Proposition \ref{teu} still remains true if we raplace $(H_{\Phi})$ by the weaker hypothesis: "\textit{$\phi$ is a monotone homeomorphism from $B_a$ onto $\mathbb{R}^N$ }". Not the same situation is for the statement \textit{(i)}, because in this case, the entire hypothesis $(H_{\Phi})$ is needed for the construction of the fixed point operator involved in the proof of \cite[Corollary 6]{Ber-Maw0}.

\smallskip
A different situation occurs in the particular case $N=1$, when instead of \cite[Example 1]{Ber-Maw0} and \cite[Corollary 6]{Ber-Maw0}, we can invoke \cite[Corollary 3]{Ber-Maw}, respectively \cite[Corollary 1]{Ber-Maw}. Thus, in this case, we obtain that both of the conclusions \textit{(i)} and \textit{(ii)} of Proposition \ref{teu} hold true with hypothesis "\textit{$\phi:(-a,a) \to \mathbb{R}$ is an increasing homeomorphism}" instead of $(H_{\Phi})$.}
\end{remark}

\medskip
\noindent {\bf Notations}. In the sequel, we denote by $\overline{u}_{x,y}$ the solution of (${\mathcal N}_{x,y}$) and by  $u_{x,y}$ the solution of (${\mathcal D}_{x,y}$) -- if \eqref{condex} holds true.

\begin{proposition}\label{contubar} The mapping $\mathbb{R}^N \times \mathbb{R}^N \ni (x,y)  \mapsto (\overline{u}_{x,y}(0), \overline{u}_{x,y}(T)) \in \mathbb{R}^N \times \mathbb{R}^N$ is continuous.
\end{proposition}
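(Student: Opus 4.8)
The plan is to establish sequential continuity. Fix $(x,y)\in\mathbb{R}^N\times\mathbb{R}^N$ and a sequence $(x_n,y_n)\to(x,y)$, and set $u_n:=\overline{u}_{x_n,y_n}$, the unique solution of $({\mathcal N}_{x_n,y_n})$ furnished by Proposition \ref{teu}(ii); the goal is to show $(u_n(0),u_n(T))\to(\overline{u}_{x,y}(0),\overline{u}_{x,y}(T))$. First I would collect a priori bounds. By the definition of a solution, $|u_n'(t)|<a$ for every $t\in[0,T]$, so $\|u_n'\|_{L^\infty}\le a$ and, since $u_n(t)-u_n(s)=\int_s^t u_n'$, the family $\{u_n\}$ is equi-Lipschitz of constant $a$. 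Integrating $-[\phi(u_n')]'+u_n=h$ over $[0,T]$ and using $\phi(u_n')(0)=x_n$, $\phi(u_n')(T)=y_n$ gives $\int_0^T u_n=\int_0^T h+(y_n-x_n)$, so the averages $\frac1T\int_0^T u_n$ stay bounded (the sequence $(x_n,y_n)$ being convergent, hence bounded); together with $\big\|u_n-\frac1T\int_0^T u_n\big\|_{L^\infty}\le Ta$, which follows from equi-Lipschitzness, this yields a uniform bound $\|u_n\|_{L^\infty}\le M$.

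Next I would show that every subsequence of $\{u_n\}$ admits a further subsequence converging in $C^1$ to $\overline{u}_{x,y}$. Given such a subsequence, the preceding bound and the Arzel\`{a}--Ascoli theorem provide a sub-subsequence, not relabelled, with $u_n\to u$ in $C$. Writing $({\mathcal N}_{x_n,y_n})$ in integrated form,
$$\phi(u_n'(t))=x_n+\int_0^t\big(u_n(s)-h(s)\big)\,ds \qquad (t\in[0,T]),$$
the right-hand side converges in $C$ to $x+\int_0^t\big(u(s)-h(s)\big)\,ds=:w(t)$, so $\phi(u_n')\to w$ in $C$; since $\phi^{-1}:\mathbb{R}^N\to B_a$ is continuous by $(H_{\Phi})$, we get $u_n'=\phi^{-1}(\phi(u_n'))\to\phi^{-1}(w)$ in $C$, hence $u\in C^1$ with $u'=\phi^{-1}(w)$, i.e. $w=\phi(u')$, and $u_n\to u$ in $C^1$. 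Passing to the limit in the displayed identity gives $\phi(u'(t))=x+\int_0^t(u-h)$, so $\phi(u')\in C^1$, $-[\phi(u')]'+u=h$ on $[0,T]$ and $\phi(u')(0)=x$; evaluating at $t=T$ yields $\phi(u')(T)=x+\int_0^T(u-h)=\lim_n\big(x_n+\int_0^T(u_n-h)\big)=\lim_n y_n=y$. Thus $u$ solves $({\mathcal N}_{x,y})$, so $u=\overline{u}_{x,y}$ by the uniqueness in Proposition \ref{teu}(ii).

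Since the limit $\overline{u}_{x,y}$ does not depend on the subsequence chosen, a standard argument forces the whole sequence $u_n$ to converge to $\overline{u}_{x,y}$ in $C^1$; in particular $(u_n(0),u_n(T))\to(\overline{u}_{x,y}(0),\overline{u}_{x,y}(T))$, which is the claim. The main obstacle, I expect, is obtaining convergence of the \emph{derivatives}: there is no direct control on $u_n'$, only on $\phi(u_n')$, and recovering convergence of $u_n'$ rests on the homeomorphism property of $\phi$ in $(H_{\Phi})$ --- this is exactly the point where more than monotonicity of $\phi$ is used. The remaining ingredients (the a priori bound, Arzel\`{a}--Ascoli, and the limit in the Volterra-type identity) are routine.
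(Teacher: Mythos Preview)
Your argument is correct and in fact yields a stronger conclusion than needed (convergence of $u_n$ to $\overline{u}_{x,y}$ in $C^1$, not merely of the endpoint values). The route, however, is genuinely different from the paper's. The paper proceeds by deriving explicit representation formulas for $\overline{u}_{x,y}(0)$ and $\overline{u}_{x,y}(T)$ in terms of $x$, $y$, $h$ and $\overline{u}_{x,y}$ itself; it then obtains $L^2$-convergence of $\overline{u}_{x_n,y_n}$ to $\overline{u}_{x,y}$ directly from an energy estimate exploiting the monotonicity of $\phi$, and finally plugs this into the explicit formulas, using uniform continuity of $\phi^{-1}$ on compact sets, to conclude. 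Your approach replaces the explicit formulas and the monotonicity-based $L^2$-estimate by a compactness argument (Arzel\`a--Ascoli) combined with the uniqueness of the limit problem. What your approach buys is a shorter, more conceptual proof that also upgrades the convergence to $C^1$; what the paper's approach buys is the explicit formulas for $\overline{u}_{x,y}(0)$ and $\overline{u}_{x,y}(T)$, which it reuses later in the a~priori estimates of Proposition~\ref{maxmonteta}. One minor point worth making explicit in your write-up: to pass from $\phi(u_n')\to w$ in $C$ to $u_n'=\phi^{-1}(\phi(u_n'))\to\phi^{-1}(w)$ in $C$, you implicitly use that the ranges $\phi(u_n')([0,T])$ all lie in a common compact set (which follows from the uniform convergence), so that uniform continuity of $\phi^{-1}$ on that set applies; this is routine but is precisely the place where continuity of $\phi^{-1}$, rather than mere monotonicity of $\phi$, is invoked, as you correctly note.
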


\begin{proof} Let $(x,y)\in \mathbb{R}^N \times \mathbb{R}^N$. Integrating
\begin{equation}\label{soneu}
-\left[ \phi(\overline{u}_{x,y}^{\prime}) \right]^{\prime}+\overline{u}_{x,y}=h(t)
\end{equation}
over $[0,T]$ we obtain
\begin{equation}\label{medneu}
\int_0^T\overline{u}_{x,y}(t)dt =y-x+\int_0^Th .
\end{equation}
Also, from
$$\phi(\overline{u}_{x,y}^{\prime})(s)=x+\int_0^s\left (\overline{u}_{x,y}-h  \right )d \tau \qquad (s\in [0,T])$$
we deduce
\begin{equation*}
\overline{u}_{x,y}(t)=\overline{u}_{x,y}(0)+\int_0^t \phi ^{-1} \left (x +\int_0^s ( \overline{u}_{x,y}-h)d \tau \right ) ds \qquad (t\in [0,T])
\end{equation*}
and, combining this with \eqref{medneu}, it follows
\begin{equation}\label{ubar0}
\overline{u}_{x,y}(0)=\frac{1}{T} \left [y-x+ \int_0^Th  -\int_0^T \left [ \int_0^t \phi ^{-1} \left (x +\int_0^s ( \overline{u}_{x,y}-h)d \tau \right ) ds \right ] dt  \right ]
\end{equation}
Similarly, we derive
\begin{equation}\label{ubarT}
\overline{u}_{x,y}(T)=\frac{1}{T} \left [y-x+ \int_0^Th  -\int_0^T \left [ \int_t^T \phi ^{-1} \left (y +\int_T^s ( \overline{u}_{x,y}-h)d \tau \right ) ds \right ] dt  \right ]
\end{equation}
 Now, let $(x_n,y_n) \rightarrow (x,y)$ in $\mathbb{R}^N \times \mathbb{R}^N$, as $n \to \infty$. From \eqref{ubar0} and \eqref{ubarT} and the boundedness of the range of $\phi^{-1}$ we infer that the sequences $\{\overline{u}_{x_n,y_n}(0)\}$
and $\{\overline{u}_{x_n,y_n}(T)\}$ are bounded in $\mathbb{R}^N$. Using the monotonicity of $\phi$ and integration by parts formula, we have the estimate
$$0=-\int_0^T \left \langle \left[ \phi(\overline{u}_{x_n,y_n}^{\prime}) -\phi( \overline{u}_{x,y}^{\prime})\right]^{\prime} \, | \, \overline{u}_{x_n,y_n} - \overline{u}_{x,y} \right \rangle + \| \overline{u}_{x_n,y_n} - \overline{u}_{x,y}\|_{L^2}^2$$
$$\geq \left \langle x_n-x \, | \, \overline{u}_{x_n,y_n}(0)-\overline{u}_{x,y}(0) \right \rangle - \left \langle y_n-y \, | \, \overline{u}_{x_n,y_n}(T)-\overline{u}_{x,y}(T) \right \rangle + \| \overline{u}_{x_n,y_n} - \overline{u}_{x,y}\|_{L^2}^2$$
which gives
$$\| \overline{u}_{x_n,y_n} - \overline{u}_{x,y}\|_{L^2}^2 \leq \left \langle y_n-y \, | \, \overline{u}_{x_n,y_n}(T)-\overline{u}_{x,y}(T) \right \rangle-\left \langle x_n-x \, | \, \overline{u}_{x_n,y_n}(0)-\overline{u}_{x,y}(0) \right \rangle,$$
implying that $\overline{u}_{x_n,y_n} \rightarrow \overline{u}_{x,y}$ in $L^2$ and hence in $L^1$, as $n \to \infty$.

\smallskip
To complete the proof, we show that
\begin{equation}\label{primconv}
\overline{u}_{x_n,y_n}(0) \rightarrow \overline{u}_{x,y}(0), \quad \mbox{as } n \to \infty;
\end{equation}
similar argument for $\overline{u}_{x_n,y_n}(T) \rightarrow \overline{u}_{x,y}(T)$. In this respect, since for any $s\in [0,T]$,
$$ \left | x_n +\int_0^s ( \overline{u}_{x_n,y_n}-h)d \tau - x -\int_0^s ( \overline{u}_{x,y}-h)d \tau\right | \leq |x_n-x| +\|\overline{u}_{x_n,y_n}-\overline{u}_{x,y}\|_{L^1} \to 0,$$
we have
\begin{equation*}
\phi^{-1}\left( x_n +\int_0^s ( \overline{u}_{x_n,y_n}-h)d \tau \right) \to \phi^{-1}\left( x +\int_0^s ( \overline{u}_{x,y}-h)d \tau \right), \quad \mbox{as }n\to \infty,
\end{equation*}
uniformly with $s\in[0,T].$ It follows
\begin{equation*}
\beta_n:=\int_0^T\left | \phi^{-1}\left( x_n +\int_0^s ( \overline{u}_{x_n,y_n}-h)d \tau \right)- \phi^{-1}\left( x +\int_0^s ( \overline{u}_{x,y}-h)d \tau \right) \right | ds \to 0, \quad \mbox{as }n \to \infty.
\end{equation*}
Then, using \eqref{ubar0}, it is straightforward to see that
\begin{equation*}
\left | \overline{u}_{x_n,y_n}(0) - \overline{u}_{x,y}(0)\right | \leq \frac{1}{T} \left [ |y_n-y|+|x_n-x|+ T\beta _n \right ],
\end{equation*}
which clearly implies \eqref{primconv}.
\end{proof}

\medskip
\noindent Now, define $\theta : \mathbb{R}^N \times \mathbb{R}^N \to \mathbb{R}^N \times \mathbb{R}^N$ by setting

$$D(\theta):=D_{T a}, \qquad \theta (x,y):= \left( -\phi(u_{x,y}^{\prime})(0),\phi(u_{x,y}^{\prime})(T) \right ) \quad \left ( (x,y)\in D_{T a} \right ).$$

\smallskip
\begin{proposition}\label{maxmonteta} The operator $\theta$ is maximal monotone and coercive.
\end{proposition}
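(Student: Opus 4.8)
The plan is to establish three things about $\theta$: monotonicity, maximality, and coercivity.

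\medskip
\textbf{Monotonicity.} Let $(x_1,y_1),(x_2,y_2)\in D_{Ta}$ and put $u_i:=u_{x_i,y_i}$, the solutions of $({\mathcal D}_{x_i,y_i})$. I would subtract the two equations $-[\phi(u_i')]'+u_i=h$, multiply by $u_1-u_2$, and integrate by parts over $[0,T]$. Since $u_i(0)=x_i$ and $u_i(T)=y_i$, the boundary term is exactly
$$\langle \phi(u_1')(T)-\phi(u_2')(T)\,|\,y_1-y_2\rangle-\langle \phi(u_1')(0)-\phi(u_2')(0)\,|\,x_1-x_2\rangle,$$
which is precisely $\langle\!\langle \theta(x_1,y_1)-\theta(x_2,y_2)\,|\,(x_1,y_1)-(x_2,y_2)\rangle\!\rangle$. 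The remaining terms, $\int_0^T\langle\phi(u_1')-\phi(u_2')\,|\,u_1'-u_2'\rangle$ and $\|u_1-u_2\|_{L^2}^2$, are both nonnegative (the first by monotonicity of $\phi$), so $\theta$ is monotone.

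\medskip
\textbf{Maximality.} The natural route is to show that $\theta+\mathrm{Id}$ is surjective onto $\mathbb{R}^N\times\mathbb{R}^N$ (Minty's theorem), i.e. for every $(p,q)$ there is $(x,y)\in D_{Ta}$ with $\theta(x,y)+(x,y)=(p,q)$, that is $-\phi(u_{x,y}')(0)+x=p$ and $\phi(u_{x,y}')(T)+y=q$. Here I would bring in Proposition~\ref{contubar}: the map $\Psi(x,y):=(\overline{u}_{x,y}(0),\overline{u}_{x,y}(T))$ is continuous, and solving $({\mathcal N}_{x,y})$ and $({\mathcal D}_{x,y})$ for the \emph{same} function $u$ amounts to a fixed-point condition relating $(x,y)$ (the Dirichlet data) to $(\phi(u')(0),-\phi(u')(T))$ (the Neumann data); concretely, $u_{x,y}=\overline{u}_{\xi,\eta}$ precisely when $(x,y)=(\overline{u}_{\xi,\eta}(0),\overline{u}_{\xi,\eta}(T))$. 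So I would reformulate the equation $\theta(x,y)+(x,y)=(p,q)$ as a fixed-point problem for the continuous self-map
$$(\xi,\eta)\longmapsto \bigl(\,p+\phi(\overline{u}_{\xi,\eta}')(0)-\overline{u}_{\xi,\eta}(0)+\overline{u}_{\xi,\eta}(0)\,,\ \dots\,\bigr)$$
on a suitable closed ball, using Proposition~\ref{contubar}, the boundedness of $\mathrm{range}(\phi^{-1})\subset B_a$ (which bounds the correction terms and keeps the iterates in a fixed ball), and Brouwer's fixed point theorem; one must also check the resulting $(x,y)$ lands in $D_{Ta}$, which follows from \eqref{dirstrip} applied to the function $\overline{u}_{\xi,\eta}$ since its derivative is $\phi^{-1}(\cdots)$, of norm $<a$. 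I expect this surjectivity-via-fixed-point step to be the main obstacle: correctly identifying the right continuous operator whose fixed points encode $({\mathcal P}_\gamma(h))$ with $\gamma=$ shifted identity, and verifying the self-map and range constraints.

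\medskip
\textbf{Coercivity.} I would show $\langle\!\langle\theta(x,y)\,|\,(x,y)\rangle\!\rangle/\|(x,y)\|\to\infty$ as $\|(x,y)\|\to\infty$ within $D_{Ta}$. Taking the equation for $u:=u_{x,y}$, multiplying by $u$ and integrating by parts,
$$\langle\!\langle\theta(x,y)\,|\,(x,y)\rangle\!\rangle=\int_0^T\langle\phi(u')\,|\,u'\rangle+\|u\|_{L^2}^2-\int_0^T\langle h\,|\,u\rangle.$$
Dropping the nonnegative first term and using $\|u\|_{L^2}^2-\int_0^T\langle h|u\rangle\ge \tfrac12\|u\|_{L^2}^2-\tfrac12\|h\|_{L^2}^2$, it suffices to control $\|(x,y)\|$ by $\|u\|_{L^2}$ up to a constant: since $|u(t)-u(0)|<Ta$ and similarly relative to $u(T)$, we have $\|u\|_\infty\ge \max(|x|,|y|)-Ta$, and comparing $\|u\|_{L^2}$ with $\|u\|_\infty$ via the estimate $\|u\|_{L^2}\ge \sqrt{T}(\|u\|_\infty - \sqrt{T}a)$ type bound (oscillation at most $Ta$) yields $\|u\|_{L^2}\ge c\|(x,y)\|-C$. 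Then $\langle\!\langle\theta(x,y)\,|\,(x,y)\rangle\!\rangle\ge \tfrac12(c\|(x,y)\|-C)^2-\tfrac12\|h\|_{L^2}^2$, which divided by $\|(x,y)\|$ tends to $+\infty$. This step is routine once the a priori $L^2$-versus-endpoint estimate is set up.
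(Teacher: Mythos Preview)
Your monotonicity and coercivity arguments are correct and match the paper's proof essentially line for line (up to cosmetic differences in how the $L^2$-vs-endpoint bound is written).

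The maximality step has a genuine gap. Your overall strategy is right and is exactly what the paper does: show $\theta+\mathrm{Id}$ is onto by recasting the equation $\theta(x,y)+(x,y)=(p,q)$ as a fixed-point problem built from the Neumann solution map, using Proposition~\ref{contubar} for continuity. The correct fixed-point operator (with the paper's notation, $(\xi,\eta)$ being your $(p,q)$) is
\[
\Lambda(x,y)=\bigl(\overline u_{\,x-\xi,\,\eta-y}(0),\ \overline u_{\,x-\xi,\,\eta-y}(T)\bigr),
\]
whose fixed points give exactly $\phi(u_{x,y}'(0))=x-\xi$ and $\phi(u_{x,y}'(T))=\eta-y$. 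Your displayed map is garbled (as written it reduces to $(\xi,\eta)\mapsto(p+\xi,\dots)$, which cannot have fixed points), so you have not yet identified the right operator.

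More importantly, your justification for the fixed point --- ``boundedness of $\mathrm{range}(\phi^{-1})$ keeps the iterates in a fixed ball, so Brouwer applies'' --- does not work. From the representation formulas \eqref{ubar0}--\eqref{ubarT} one sees
\[
\overline u_{\,x-\xi,\,\eta-y}(0)=\tfrac1T\bigl[(\xi+\eta)-(x+y)\bigr]+\text{(bounded)},
\]
so $\Lambda$ contains a term \emph{linear} in $(x,y)$ and does \emph{not} map any ball into itself. The boundedness of $\phi^{-1}$ only controls the integral ``correction'' terms, not the linear part coming from integrating the equation. The paper handles this by Schaefer's theorem: one shows a priori that any solution of $(x,y)=\mu\Lambda(x,y)$, $\mu\in(0,1]$, is bounded. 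This requires writing out the two scalar equations, solving the resulting $2\times 2$ linear system for $x,y$ in terms of bounded quantities $Q_1,Q_2$, and checking the inverse matrix is uniformly bounded in $\mu$. That algebraic step is what actually closes the argument, and it is missing from your sketch.
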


\begin{proof}
To prove the maximal monotonicity of $\theta$, according to \cite[Proposition 2.2]{Br}, we have to show that: (i) $\theta$ is monotone and (ii) $\theta + i_d:D_{T a} \to \mathbb{R}^N \times \mathbb{R}^N$ is surjective (here, $i_d$ stands for the identity mapping on $\mathbb{R}^N \times \mathbb{R}^N$).

\medskip
\noindent (i) Let $(x,y), \, (\xi , \eta)\in D_{T a}$. The monotonicity inequality
$$ \langle \! \langle \theta (x,y) - \theta (\xi, \eta)| (x,y) - (\xi, \eta) \rangle \! \rangle \geq 0 $$
means
\begin{equation}\label{monteta}
\beta:=-\langle \phi(u_{x,y}^{\prime})(0)-\phi(u_{\xi , \eta}^{\prime})(0) | x-\xi \rangle+\langle \phi(u_{x,y}^{\prime})(T)-\phi(u_{\xi , \eta}^{\prime})(T) | y-\eta \rangle \geq 0.
\end{equation}
To see that this is true, we integrate by parts and make use of the boundary conditions ($u_{x,y}(0)=x, \, u_{x,y}(T)=y, \, u_{\xi , \eta}(0)=\xi, \, u_{\xi , \eta}(T)=\eta $) in
$$0=\int_0^T\langle \left [ \phi(u_{x,y}^{\prime}) -\phi( u_{\xi,\eta}^{\prime})\right]^{\prime}| \, u_{x,y} - u_{\xi , \eta} \rangle - \| u_{x,y}-u_{\xi, \eta}\|_{L^2}^2,$$
obtaining
$$0=\beta -\int_0^T\langle \phi(u_{x,y}^{\prime}) -\phi( u_{\xi,\eta}^{\prime}) | \, (u_{x,y}^{\prime} - u_{\xi , \eta}^{\prime} \rangle )- \| u_{x,y}-u_{\xi, \eta}\|_{L^2}^2$$
which, on account of the monotonicity of $\phi$, clearly implies \eqref{monteta}.

\medskip
\noindent (ii) For an arbitrary given and fixed $(\xi, \eta)\in {\mathbb{R}^N \times \mathbb{R}^N}$, we have to show that there is some $(x,y)= (x_{\xi}, y_{\eta})\in D_{T a}$ such that
\begin{equation}\label{surj}
\left( -\phi(u_{x,y}^{\prime})(0),\phi(u_{x,y}^{\prime})(T) \right )=\left(\xi-x,\eta - y \right ).
\end{equation}
In this view, we proceed by a fixed point argument, as follows. Let the operator $\Lambda = \Lambda _{\xi , \eta} : \mathbb{R}^N \times \mathbb{R}^N \to \mathbb{R}^N \times \mathbb{R}^N$ be defined by
$$\Lambda(x,y)= \left ( \overline{u}_{x-\xi, \eta-y} (0), \overline{u}_{x-\xi, \eta-y} (T) \right ) \quad \left ( (x,y) \in \mathbb{R}^N \times \mathbb{R}^N \right ).$$
Suppose that we have already proved that there exists a fixed point $(x,y)$ of $\Lambda$. This means
\begin{equation}\label{fixpoint}
\left ( \overline{u}_{x-\xi, \eta-y} (0), \overline{u}_{x-\xi, \eta-y} (T) \right )=(x,y),
\end{equation}
which reads: the solution $\overline{u}_{x-\xi, \eta-y}$ of the Neumann problem $({\mathcal N}_{x-\xi,\eta-y})$ satisfies the Dirichlet boundary conditions
\begin{equation}\label{contdir}
\overline{u}_{x-\xi, \eta-y}(0)=x, \quad \overline{u}_{x-\xi, \eta-y}(T)=y.
\end{equation}
Thus, $(x,y)\in D_{T a}$ by Proposition \ref{teu} $(i)$ and $\overline{u}_{x-\xi, \eta-y}=u_{x,y}$ which implies \eqref{surj}.

\smallskip
Therefore, it remains to prove that $\Lambda$ has a fixed point. We apply the a priori estimate method. By Proposition \ref{contubar} it is immediate that $\Lambda$  is continuous and hence compact on the finite dimensional space $\mathbb{R}^N \times \mathbb{R}^N$.
According to Schaefer's theorem (see e.g. \cite[Corollary 4.4.12]{Llo}), it suffices to show that there is a constant $c>0$ such that
\begin{equation}\label{constM}
|x|+|y| \leq c
\end{equation}
for all $(x,y) \in \mathbb{R}^N \times \mathbb{R}^N$ satisfying $(x,y)=\mu \, \Lambda (x,y)$ with some $\mu\in (0,1].$

\medskip
So, let $(x,y)\in \mathbb{R}^N \times \mathbb{R}^N$ be such that
\begin{equation*}\label{aest}
(x,y)= \mu \left (\overline{u}_{x-\xi, \eta-y}(0), \overline{u}_{x-\xi, \eta-y}(T)  \right )
\end{equation*}
with $\mu\in (0,1].$ From \eqref{ubar0} and \eqref{ubarT} we have
\begin{equation}\label{sys}
\left ( \frac{1}{\mu}+\frac{1}{T} \right ) x +\frac{1}{T} \, y= Q_1(x,y) \ \mbox{ and } \ \frac{1}{T} \, x + \left ( \frac{1}{\mu}+\frac{1}{T} \right )y= Q_2(x,y)
\end{equation}
where
$$ Q_1(x,y) :=\frac{1}{T} \left [ \xi + \eta + \int_0^Th  -\int_0^T \left [ \int_0^t \phi ^{-1} \left (x-\xi +\int_0^s ( \overline{u}_{x-\xi,\eta-y}-h)d \tau \right ) ds \right ] dt \right ],$$

$$Q_2(x,y):=\frac{1}{T} \left [ \xi + \eta +  \int_0^Th  -\int_0^T \left [ \int_t^T \phi ^{-1} \left (\eta -y +\int_T^s ( \overline{u}_{x-\xi,\eta-y}-h)d \tau \right ) ds \right ] dt \right ].$$
Solving \eqref{sys} we obtain
\begin{equation}\label{solsys}
x= \frac{ \displaystyle \left ( \frac{1}{\mu}+\frac{1}{T} \right ) Q_1(x,y)-\frac{1}{T}Q_2(x,y)}{\displaystyle \left ( \frac{1}{\mu}+\frac{1}{T} \right )^2-\left (\frac{1}{T} \right )^2}, \quad
y= \frac{ \displaystyle \left ( \frac{1}{\mu}+\frac{1}{T} \right ) Q_2(x,y)-\frac{1}{T}Q_1(x,y)}{\displaystyle \left ( \frac{1}{\mu}+\frac{1}{T} \right )^2-\left (\frac{1}{T} \right )^2}.
\end{equation}
Then, using that
$$\max \left \{ \left | Q_1(x,y) \right | \mbox{, } \left | Q_2(x,y) \right | \right \} \leq \frac{1}{T} \left [ \left |\xi \right | + \left | \eta \right | + \left |  \int_0^Th \right | +T^2 a \right ] =: \overline{Q}=\overline{Q}(\xi,\eta),$$
together with the assumption $\mu\in (0,1]$ and \eqref{solsys}, it is straightforward to see that \eqref{constM} holds true with $c=2\, \overline{Q}$ and the proof of the maximal monotonicity of $\theta$ is complete.

\medskip
We show that $\theta$ is coercive, i.e.,
\begin{equation}\label{coerciv}
\lim_{\scriptsize \begin{array}{crlc}
&\|(x,y)\| \to +\infty \\
&(x,y) \in D_{Ta}
\end{array}} \! \!
{\frac{\langle \! \langle \theta(x,y)|(x,y)\rangle \! \rangle }{\|(x,y)\|}=+\infty}.
\end{equation}
Let $(x,y)\in D_{Ta }$. Integrating by parts and using the boundary conditions ($u_{x,y}(0)=x$, $u_{x,y}(T)=y$) in
$$ -\int_0^T\langle \left [\phi(u_{x,y}^{\prime})\right]^{\prime} |\, u_{x,y}\rangle + \| u_{x,y}\|_{L^2}^2=\int_0^T\langle h | \, u_{x,y}\rangle  $$
we obtain
$$-\langle \! \langle \theta(x,y)|(x,y)\rangle \! \rangle+\int_0^T \langle \phi(u_{x,y}^{\prime}) |\, u_{x,y}^{\prime} \rangle+ \| u_{x,y}\|_{L^2}^2=\int_0^T\langle h | \, u_{x,y} \rangle,$$
which gives
\begin{eqnarray}\label{etunu}
\displaystyle \langle \! \langle \theta(x,y)|(x,y)\rangle \! \rangle & \geq & \| u_{x,y}\|_{L^2}^2 -\displaystyle  \int_0^T \langle h | \, u_{x,y} \rangle \nonumber \\
& \geq & \| u_{x,y}\|_{L^2} \left ( \| u_{x,y}\|_{L^2} - \sqrt {T} \|h\|_{\infty}\right ).
\end{eqnarray}

Let $t_0\in[0,T]$ be such that
$$\int_0^T|u_{x,y}(t)|dt=T |u_{x,y}(t_0)|.$$
From
$$|u_{x,y}(t_0)|\leq \frac{1}{\sqrt{T}}\|u_{x,y}\|_{L^2}$$
and
$$u_{x,y}(t)=u_{x,y}(t_0)+\int_{t_0}^tu_{x,y}^{\prime}(s)ds $$
it follows
\begin{equation}\label{ineqw}
|u_{x,y}(t)| \leq \frac{1}{\sqrt{T}}\|u_{x,y}\|_{L^2}+T a \qquad (t\in[0,T]).
\end{equation}
As $x=u_{x,y}(0)$, $y=u_{x,y}(T)$,  this yields
$$ \max \left \{ |x|, \, |y| \right \} \leq \frac{1}{\sqrt{T}}\|u_{x,y}\|_{L^2}+T a $$
and, hence
$$\| (x,y) \|= \sqrt{|x|^2+|y|^2}\leq \sqrt{2} \left (\frac{1}{\sqrt{T}}\|u_{x,y}\|_{L^2}+T a \right ),$$
or\begin{equation*}\label{etdoi}
\|u_{x,y}\|_{L^2} \geq \sqrt{T} \left ( \frac{1}{\sqrt{2}}\|(x,y)\|-Ta \right )
\end{equation*}
This together with \eqref{etunu} shows that, for $(x,y)\in D_{Ta}$ with $\|(x,y)\|>\sqrt{2} (Ta +\|h\|_{\infty})$, it holds
$$ \displaystyle \langle \! \langle \theta(x,y)|(x,y)\rangle \! \rangle \geq T\left ( \frac{1}{\sqrt{2}}\|(x,y)\|-Ta \right )\left (\frac{1}{\sqrt{2}}\|(x,y)\|-Ta-\|h\|_{\infty} \right ),$$
which clearly implies \eqref{coerciv} and the proof is complete.
\end{proof}

\medskip
\begin{remark}\label{remark22}
{\em ($i$) Observe that by the reasoning in the proof of inequality \eqref{ineqw} we can actually infer that any $v\in W^{1,\infty}$ with $\|v^{\prime}\|_{L^{\infty}} \leq a$
satisfies
\begin{equation}\label{ineqw1}
|v(t)| \leq \frac{1}{\sqrt{T}}\|v\|_{L^2}+T a \qquad (t\in[0,T]).
\end{equation}

\medskip
($ii$) It is worth comparing the proof of the maximal monotonicity of $\theta$ in our singular case with that in the classical case. More precisely, if for any $x,y\in \mathbb{R}^N$, we denote by $q_{x,y}$ the unique solution of the Dirichlet problem
$$-q''+q=h(t) \quad \mbox{ in } [0,T], \qquad q(0)=x, \, q(T)=y$$
and we set $\theta_0(x,y):=(-q_{x,y}'(0), q_{x,y}'(T))$, then it can be shown (see the proof of Proposition 3.1 from \cite{KaPamw}) that $\theta_0$ is monotone and continuous on $\mathbb{R}^N \times \mathbb{R}^N$. Thus, since $D(\theta_0)=\mathbb{R}^N \times \mathbb{R}^N$, it will follow from \cite[Proposition 2.4]{Br} that it is maximal monotone. By comparison, in our case - of the operator  $\theta$, it occurs the obstruction of the fact that $D(\theta)=D_{T a}\subsetneqq \mathbb{R}^N \times \mathbb{R}^N$ and hence the above argument can not be invoked. This is why to overcome this difficulty we have been led to provide a completely different proof.}
\end{remark}

\medskip
Now, we can state the main result of this section.

\smallskip
\begin{theorem}\label{rpmax}
If $\gamma $ is maximal monotone and $0_{\mathbb{R}^N \times \mathbb{R}^N}\in \gamma(0_{\mathbb{R}^N \times \mathbb{R}^N})$, then problem $[{\mathcal P}_\gamma (h)]$ has an unique solution $u_h$, for any $h\in C$.
\end{theorem}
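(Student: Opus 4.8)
The plan is to reduce the existence of a solution of $[{\mathcal P}_\gamma (h)]$ to a surjectivity statement for the sum $\theta+\gamma$ of maximal monotone operators on the finite-dimensional space $\mathbb{R}^N\times\mathbb{R}^N$, where $\theta$ is the operator of Proposition~\ref{maxmonteta}; uniqueness is already provided by Proposition~\ref{p1}. The first step is the \emph{key equivalence}. If $u$ solves $[{\mathcal P}_\gamma (h)]$ and $(x,y):=(u(0),u(T))$, then $|x-y|<Ta$ and $u$ solves the Dirichlet problem $({\mathcal D}_{x,y})$, so $u=u_{x,y}$ by Proposition~\ref{teu}~$(i)$; hence, directly from the definition of $\theta$,
$$\bigl(\phi(u')(0),-\phi(u')(T)\bigr)=\bigl(\phi(u_{x,y}')(0),-\phi(u_{x,y}')(T)\bigr)=-\theta(x,y),$$
so the boundary condition of $[{\mathcal P}_\gamma (h)]$ reads $-\theta(x,y)\in\gamma(x,y)$, that is $0_{\mathbb{R}^N\times\mathbb{R}^N}\in(\theta+\gamma)(x,y)$. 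Conversely, if $0_{\mathbb{R}^N\times\mathbb{R}^N}\in(\theta+\gamma)(x,y)$ then $(x,y)\in D(\theta)\cap D(\gamma)=D_{Ta}\cap D(\gamma)$, so $u_{x,y}$ is well defined, satisfies $-[\phi(u')]'+u=h$ by construction, and fulfils the boundary condition by the same computation; thus $u_{x,y}$ is a solution of $[{\mathcal P}_\gamma (h)]$. It therefore suffices to show that $0_{\mathbb{R}^N\times\mathbb{R}^N}$ belongs to the range of $\theta+\gamma$.

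For this I would prove that $\theta+\gamma$ is maximal monotone and coercive, hence surjective. By Proposition~\ref{maxmonteta}, $\theta$ is maximal monotone with $D(\theta)=D_{Ta}$ open; $\gamma$ is maximal monotone by hypothesis; and since $0_{\mathbb{R}^N\times\mathbb{R}^N}\in\gamma(0_{\mathbb{R}^N\times\mathbb{R}^N})$ we have $0_{\mathbb{R}^N\times\mathbb{R}^N}\in D(\gamma)\cap D_{Ta}=D(\gamma)\cap \mathrm{int}\,D(\theta)\neq\emptyset$. Hence, by Rockafellar's theorem on sums of maximal monotone operators (see \cite{Br,Rock}), $\theta+\gamma$ is maximal monotone. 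Coercivity is obtained by combining \eqref{coerciv} with the observation that $0_{\mathbb{R}^N\times\mathbb{R}^N}\in\gamma(0_{\mathbb{R}^N\times\mathbb{R}^N})$ together with monotonicity of $\gamma$ forces $\langle\!\langle v\,|\,(x,y)\rangle\!\rangle\geq 0$ for every $(x,y)\in D(\gamma)$ and every $v\in\gamma(x,y)$: indeed, for $(x,y)\in D_{Ta}\cap D(\gamma)$ and $w\in(\theta+\gamma)(x,y)$ this yields $\langle\!\langle w\,|\,(x,y)\rangle\!\rangle\geq\langle\!\langle\theta(x,y)\,|\,(x,y)\rangle\!\rangle$, whose right-hand side is coercive over $D_{Ta}$ by \eqref{coerciv}. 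Since a coercive maximal monotone operator on a Hilbert space has full range (\cite{Br}), $0_{\mathbb{R}^N\times\mathbb{R}^N}$ lies in the range of $\theta+\gamma$, and the key equivalence produces the desired (unique) solution $u_h$.

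The delicate point is the maximal monotonicity of $\theta+\gamma$: since $D(\theta)=D_{Ta}$ is a \emph{proper} open subset of $\mathbb{R}^N\times\mathbb{R}^N$ --- exactly the singular phenomenon emphasised in Remark~\ref{remark22}~$(ii)$ --- the elementary criterion for sums of everywhere-defined monotone operators is not available, and one must verify the constraint qualification $D(\gamma)\cap\mathrm{int}\,D(\theta)\neq\emptyset$; this is where the normalisations $0_{\mathbb{R}^N\times\mathbb{R}^N}\in\gamma(0_{\mathbb{R}^N\times\mathbb{R}^N})$ (and, in the application to \eqref{sysp}--\eqref{bcsysp}, $j(0_{\mathbb{R}^N\times\mathbb{R}^N})=0$ with $0_{\mathbb{R}^N\times\mathbb{R}^N}\in\partial j(0_{\mathbb{R}^N\times\mathbb{R}^N})$) enter essentially. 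The remaining ingredients --- the boundary-condition bookkeeping and the passage from coercivity to surjectivity --- are routine.
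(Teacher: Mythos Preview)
Your proposal is correct and follows essentially the same route as the paper: reduce existence to finding $(x,y)$ with $0_{\mathbb{R}^N\times\mathbb{R}^N}\in(\theta+\gamma)(x,y)$, establish that $\theta+\gamma$ is maximal monotone via the sum rule (using $0_{\mathbb{R}^N\times\mathbb{R}^N}\in D(\gamma)\cap\mathrm{int}\,D(\theta)$), and obtain surjectivity from the coercivity of $\theta$ together with $\langle\!\langle v\,|\,(x,y)\rangle\!\rangle\geq 0$ for every $v\in\gamma(x,y)$. The only cosmetic difference is that the paper treats separately the case where $D(\theta+\gamma)=D(\gamma)\cap D_{Ta}$ is bounded (invoking \cite[Corollaire~2.2]{Br} directly) before appealing to coercivity in the unbounded case, whereas you subsume both under the single surjectivity-from-coercivity statement.
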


\begin{proof} The uniqueness part is ensured by Proposition \ref{p1}. The prove the existence, let $\Gamma : \mathbb{R}^N \times \mathbb{R}^N \to 2^{\mathbb{R}^N \times \mathbb{R}^N}$ be defined by
$$D(\Gamma):=D(\gamma) \cap D_{T a}, \qquad \Gamma (x,y):= \gamma (x,y)+ \theta (x,y) \quad \left ( (x,y)\in D(\gamma ) \cap D_{T a} \right ).$$
Since $\theta$ is maximal monotone (Proposition \ref{maxmonteta}) and  $D(\gamma) \cap \mbox{int}D(\theta ) = D(\gamma) \cap D_{T a} \ni 0_{\mathbb{R}^N \times \mathbb{R}^N}$,  from \cite[Corollaire 2.7]{Br} we have that
$\Gamma$ is maximal monotone.

\medskip
We \textit{claim}  that $\Gamma : D(\Gamma) \to 2^{\mathbb{R}^N \times \mathbb{R}^N}$ is surjective.

\medskip
If $D(\Gamma)$ is bounded this follows by  \cite[Corollaire 2.2]{Br}. If this is not the case, we show that $\Gamma$ is coercive, in the sense that
\begin{equation}\label{coercivG}
\lim_{\scriptsize \begin{array}{crlc}
&\|(x,y)\| \to +\infty \\
&(x,y) \in D(\Gamma)
\end{array}} \! \!
{\frac{\displaystyle \inf \left \{ \langle \! \langle (\xi,\eta)|(x,y)\rangle \! \rangle\, : \, (\xi, \eta ) \in \Gamma (x,y) \right \} }{\|(x,y)\|}=+\infty}.
\end{equation}
According to \cite[Corollaire 2.4]{Br} (also, see \cite[Corollary 32.35]{Zeid}),  this will ensure the surjectivity of $\Gamma$, as claimed.

\medskip
So, let $(x,y)\in D(\Gamma)$ and $(\xi, \eta) \in \Gamma(x,y)$. To estimate $\langle \! \langle (\xi,\eta)|(x,y)\rangle \! \rangle$, let $(\zeta,\nu) \in \gamma (x,y)$ be with
$(\xi, \eta )=(\zeta, \nu)+\theta(x,y).$ Using that $\gamma$ is monotone and $0_{\mathbb{R}^N \times \mathbb{R}^N}\in \gamma(0_{\mathbb{R}^N \times \mathbb{R}^N})$, we obtain
$$\langle \! \langle (\xi,\eta)|(x,y)\rangle \! \rangle= \langle \! \langle (\zeta,\nu)-0_{\mathbb{R}^N \times \mathbb{R}^N}|(x,y)-0_{\mathbb{R}^N \times \mathbb{R}^N}\rangle \! \rangle + \langle \! \langle \theta(x,y)|(x,y)\rangle \! \rangle   \geq \langle \! \langle \theta(x,y)|(x,y)\rangle \! \rangle.   $$
This implies
$$\displaystyle \inf \left \{ \langle \! \langle (\xi,\eta)|(x,y)\rangle \! \rangle\, : \, (\xi, \eta ) \in \Gamma (x,y) \right \} \geq  \langle \! \langle \theta(x,y)|(x,y)\rangle \! \rangle$$
which, together with the coercivity of $\theta$ (see Proposition \ref{maxmonteta}), yields \eqref{coercivG}.

\medskip
Now, since $\Gamma$ is surjective, we have that there exists $(x,y)\in D(\gamma ) \cap D_{T a}$ such that
$$0_{\mathbb{R}^N \times \mathbb{R}^N}\in \gamma(x,y)+ \left( -\phi(u_{x,y}^{\prime})(0),\phi(u_{x,y}^{\prime})(T) \right ),$$
which means
$$\left( \phi(u_{x,y}^{\prime})(0),-\phi(u_{x,y}^{\prime})(T) \right )\in \gamma(u_{x,y}(0), u_{x,y}(T))$$

\smallskip
\noindent and the proof is accomplished by taking $u_h=u_{x,y}$.
\end{proof}

\medskip
\begin{remark}\label{remark3}{\em On account of Remark \ref{remark2} it is easily seen that in the particular case $N=1$ Theorem \ref{rpmax} holds true under the following hypothesis weaker than $(H_{\Phi })$: "\textit{$\phi:(-a,a) \to \mathbb{R}$ is an increasing homeomorphism}".}
\end{remark}

\section{A variational approach}\label{sectiunea3}
\noindent Under hypothesis $(H_{j})$, Theorem \ref{rpmax} will be employed with $\gamma= \partial j$. Thus, for $h\in C$, we consider the problem
$$
-\left[ \phi(u^{\prime}) \right] ^{\prime}+u = h(t),\qquad \left ( \phi \left( u^{\prime }\right)(0), -\phi \left( u^{\prime }\right)(T)\right )\in \partial j(u(0), u(T)).  \leqno{
[{\mathcal P}_{\partial j} (h)]
} $$
Since $\partial j$ is maximal monotone and $0_{\mathbb{R}^N \times \mathbb{R}^N}\in \partial j (0_{\mathbb{R}^N \times \mathbb{R}^N})$, from Theorem \ref{rpmax} this problem has an unique
solution $u_h$.

\medskip
Setting
$${\mathcal K}:= \left \{ v \in W^{1, \infty} \, : \, \|v^{\prime}\|_{_{L^{\infty}}}\leq a \right \},$$
we define $\Psi:C \to (-\infty , +\infty ]$ by
$$\Psi(v)=\displaystyle \left\{
\begin{array}{ll} \displaystyle
\int_0^T \left [\Phi(v^{\prime}) -\Phi (0) \right ] , & \hbox{if $v\in {\mathcal K}$;} \\
\\
+\infty, & \hbox{otherwise}.
\end{array}
\right.
$$
Then, following exactly the outline and the arguments from  the proof of Lemma 1 and \textit{Step I} in the proof of Proposition 1 from \cite{BeJeMaCV} (also, see \cite[Lemma 4.1]{BrMa}), we get that
the convex set ${\mathcal K}$ is closed and $\Psi$ is proper, convex and lower semicontinuous. We also introduce the function $J:C \to (-\infty , +\infty ]$ given by
$$ J(v)=j(v(0),v(T)) \qquad (v\in C).$$
Since $j$ is proper, convex and lower semicontinuous, it is immediate that the same hold true for $J$.
Notice that
\begin{equation}\label{dompj}
\begin{array}{rl}
D(\Psi + J)=D(\Psi) \cap D(J)  = & \left \{ v\in {\mathcal K} \, : \, (v(0),v(T))\in D(j)  \right \} \\
 = &\left \{ v\in {\mathcal K} \, : \, (v(0),v(T))\in D(j) \cap \overline{D}_{Ta} \right \},
\end{array}
\end{equation}
$u_h\in  D(\Psi + J)$ and $(u_h(0),u_h(T))\in {D}_{Ta}\cap D\left ( \partial j \right )$ (see \eqref{stripsol}). Also, as $\Psi$ is bounded from below and $J \geq 0$, it is clear that  $\Psi + J$ is bounded from below on $C$.

\begin{proposition}\label{minim}
The solution $u_h$ of problem $[{\mathcal P}_{\partial j} (h)]$ is the unique solution in $ D(\Psi + J)$ of the variational inequality
\begin{equation}\label{varinequality}
\displaystyle\int_0^T \left [\Phi(v^{\prime}) -\Phi (u^{\prime}) \right ]+J(v)-J(u)+\int_0^T \langle  u-h \, | \, v-u \rangle \geq 0, \quad \mbox{ for all }v \in D(\Psi + J)
\end{equation}
and the unique minimum point of $E:C \to (-\infty , +\infty ]$ defined by
\begin{equation*}
E(v)=\Psi(v) +J(v)+\frac{1}{2}\|v\|_{L^2}^2-\int_0^T\langle h \, | \, v \rangle \qquad (v\in C).
\end{equation*}
\end{proposition}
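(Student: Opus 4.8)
The plan is to split the statement into two essentially independent pieces: a convex-analytic equivalence between minimizing $E$ and solving \eqref{varinequality} (together with uniqueness), and the verification that the solution $u_h$ of $[{\mathcal P}_{\partial j}(h)]$ actually satisfies \eqref{varinequality}.

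\emph{First piece.} Write $E = G + (\Psi + J)$ with $G(v) = \frac12\|v\|_{L^2}^2 - \int_0^T\langle h \, | \, v\rangle$, which is a convex functional of class $C^1$ on $C$ whose derivative acts by $\langle G'(v), w\rangle = \int_0^T\langle v - h \, | \, w\rangle$, while $\Psi + J$ is proper, convex, lower semicontinuous and, as already observed, bounded below; moreover $D(E) = D(\Psi + J)$ and $u_h \in D(\Psi + J)$. If $u$ minimizes $E$, then for every $v \in D(E)$ and $t \in (0,1]$ the convexity of $\Psi$ and $J$ gives $0 \leq E(u + t(v-u)) - E(u) \leq G(u + t(v-u)) - G(u) + t\big(\Psi(v) + J(v) - \Psi(u) - J(u)\big)$; dividing by $t$ and letting $t \to 0^+$ yields \eqref{varinequality} (the $\Phi(0)$ terms in $\Psi(v) - \Psi(u)$ cancel). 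Conversely, if $u$ solves \eqref{varinequality}, convexity of $G$ gives $G(v) - G(u) \geq \langle G'(u), v-u\rangle$, hence $E(v) - E(u) \geq 0$ for all $v \in D(E)$, i.e. $u$ minimizes $E$. Finally, $G$ is strictly convex on $L^2$ and $\Psi + J$ is convex, so $E$ is strictly convex on $D(E)$ and admits at most one minimum point; by the equivalence just proved, \eqref{varinequality} also has at most one solution in $D(\Psi + J)$.

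\emph{Second piece (the core).} Fix $v \in D(\Psi + J)$, i.e. $v \in {\mathcal K}$ with $(v(0), v(T)) \in D(j)$. Since $|u_h'(t)| < a$ for all $t$ and $\Phi$ is convex on $\overline{B}_a$ and differentiable at the interior point $u_h'(t)$ with $\nabla\Phi(u_h'(t)) = \phi(u_h'(t))$, the subgradient inequality gives pointwise $\Phi(v'(t)) - \Phi(u_h'(t)) \geq \langle \phi(u_h'(t)) \, | \, v'(t) - u_h'(t)\rangle$; integrating over $[0,T]$ and then integrating by parts — legitimate because $\phi(u_h') \in C^1$ and $v \in W^{1,\infty}$ — gives
$$\int_0^T[\Phi(v') - \Phi(u_h')] \geq \langle \phi(u_h')(T) \, | \, v(T) - u_h(T)\rangle - \langle \phi(u_h')(0) \, | \, v(0) - u_h(0)\rangle - \int_0^T\langle [\phi(u_h')]' \, | \, v - u_h\rangle.$$
Substituting the equation $[\phi(u_h')]' = u_h - h$ replaces the last integral by $\int_0^T\langle u_h - h \, | \, v - u_h\rangle$, exactly the term appearing in \eqref{varinequality}. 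For the boundary terms, the condition $(\phi(u_h')(0), -\phi(u_h')(T)) \in \partial j(u_h(0), u_h(T))$ and the definition of the subdifferential of $j$, evaluated at $(v(0), v(T))$, yield
$$J(v) - J(u_h) = j(v(0),v(T)) - j(u_h(0), u_h(T)) \geq \langle \phi(u_h')(0) \, | \, v(0) - u_h(0)\rangle - \langle \phi(u_h')(T) \, | \, v(T) - u_h(T)\rangle.$$
Adding the two displayed inequalities gives precisely \eqref{varinequality}; combined with the first piece, this shows that $u_h$ is the unique solution of \eqref{varinequality} in $D(\Psi + J)$ and the unique minimum point of $E$.

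The step requiring the most care is the second piece: one must use that a solution has derivative strictly inside $B_a$ (so that $\Phi$ is differentiable there with $\nabla\Phi = \phi$), justify the integration by parts from the regularity $\phi(u_h') \in C^1$ and $v \in W^{1,\infty}$, and check that, after expanding $\langle\!\langle\cdot|\cdot\rangle\!\rangle$ on $\mathbb{R}^N \times \mathbb{R}^N$, the boundary terms cancel exactly against the subdifferential inequality for $j$. The convex-analytic first piece is routine.
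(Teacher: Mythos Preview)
Your proof is correct and follows essentially the same route as the paper: the core second piece (subgradient inequality for $\Phi$, integration by parts, substitution of the equation, and the subdifferential inequality for $j$) matches the paper's argument line for line. The only cosmetic difference is in the first piece, where the paper shows just the direction ``VI $\Rightarrow$ minimum'' via the elementary inequality $\tfrac{|y|^2}{2}-\tfrac{|x|^2}{2}\geq\langle x\,|\,y-x\rangle$ and then obtains uniqueness from the explicit estimate $E(v)-E(u_h)\geq\tfrac12\|v-u_h\|_{L^2}^2$, rather than invoking abstract strict convexity of $E$ as you do.
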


\begin{proof} First, using the elementary inequality
\begin{equation*}
\frac{|y|^2}{2}-\frac{|x|^2}{2} \geq \langle x\, | \, y-x \rangle  \qquad (x,y \in \mathbb{R}^N),
\end{equation*}
it is straightforward to see that any solution of \eqref{varinequality} is a minimum point of $E$. Therefore, it suffices to prove that $u_h$ solves \eqref{varinequality} and that it is
the unique minimum point of $E$ on $C$.

\medskip
Let $v\in {\mathcal K}$ be with $(v(0),v(T))\in D(j)$ and $v \neq u_h$ be arbitrarily chosen.
Using that, for all $(x,y) \in \mathbb{R}^N \times \mathbb{R}^N$ it holds
$$j(x,y)-j(u_h(0),u_h(T)) \geq \langle \phi(u_h^{\prime}(0)  | x-u_h(0) \rangle-\langle  \phi(u_h^{\prime}(T)  | y-u_h(T) \rangle,$$
we get
$$\begin{array}{ll} \displaystyle A&:= \displaystyle\int_0^T \left [\Phi(v^{\prime}) -\Phi (u_h^{\prime}) \right ]+J(v)-J(u_h)\\
 & \geq  \displaystyle \int_0^T \langle\phi(u_h^{\prime} )| v'-u_h^{\prime} \rangle+\langle \phi(u_h^{\prime}(0) ) | v(0)-u_h(0) \rangle-\langle \phi(u_h^{\prime}(T) ) | v(T)-u_h(T) \rangle.
 \end{array}
$$
On the other hand, as $u_h$ solves the equation in $[{\mathcal P}_{\partial j} (h)]$, we have
$$-\int _0^T \left \langle \left [ \phi(u_h^{\prime} ) \right ] ^{\prime}| v-u_h \right \rangle +\int _0^T\langle u_h |v-u_h \rangle=\int _0^T\langle h |v-u_h \rangle$$
and integration by parts formula gives
$$ \displaystyle \int_0^T \! \! \langle \phi(u_h^{\prime} )| v'-u_h^{\prime} \rangle +\langle \phi(u_h^{\prime}(0))  | v(0)-u_h(0) \rangle-\langle \phi(u_h^{\prime}(T))  | v(T)-u_h(T) \rangle$$ $$= \! \! \int _0^T \! \! \langle h-u_h |v-u_h \rangle$$
Thus, we obtain
\begin{equation}\label{estimA}
A \geq \int _0^T\langle h -u_h| \,v-u_h \rangle
\end{equation}
and this implies that $u_h$ is a solution of \eqref{varinequality}.
It remains to check that
\begin{equation*}
E(v) > E(u_h).
\end{equation*}
From
$$E(v)-E(u_h)=A+\frac{1}{2} \left ( \|v\|_{L^2}^2- \|u_h\|_{L^2}^2 \right )-\int _0^T\langle h |v-u_h \rangle
$$
and \eqref{estimA} it follows
$$E(v)-E(u_h) \geq \frac{1}{2}\|v-u_h\|_{L^2}^2 > 0$$
and the proof is complete.
\end{proof}

\medskip
Further, having in view hypothesis $(H_{F})$, we introduce ${\mathcal F}:C \to \mathbb{R}$ by setting
$$ {\mathcal F}(v):=-\int_0^TF(t,v)dt \qquad (v\in C).$$
 A standard reasoning (also see \cite[Remark 2.7]{PJ}) shows that ${\mathcal F}$ is of class $C^1$ on $C$ and its derivative is given by
$$\langle {\mathcal F}^{\prime} (u),v \rangle= -\int_0^T \langle \nabla_u F(t,u) |v \rangle dt \qquad (u,v\in C).$$

Now, \textit{the energy functional associated to problem} \eqref{sysp}-\eqref{bcsysp} will be the mapping ${\mathcal E}:C \to (-\infty , + \infty ]$ defined by
\begin{equation}\label{energy}
{\mathcal E}:=\Psi + J + {\mathcal F}.
\end{equation}
This fits the structure required by Szulkin's critical point theory \cite{Sz}. Accordingly, an element $u\in D(\Psi + J)$ is called \textit{a critical point of} ${\mathcal E}$ in \eqref{energy} if it satisfies
\begin{equation}\label{critpointdef}
(\Psi + J)(v)-(\Psi + J)(u) + \langle {\mathcal F}^{\prime} (u),v-u \rangle \geq 0, \quad \mbox{ for all }v \in D(\Psi + J).
\end{equation}
Also, a sequence $\{ u_n \} \subset D(\Psi + J)$ will be called a \textit{(PS)-sequence} if ${\mathcal E}(u_n) \to c \in \mathbb{R}$ and
\begin{equation}\label{PSseq}
\displaystyle\int_0^T \left [\Phi(v^{\prime}) -\Phi (u_n^{\prime}) \right ]+J(v)-J(u_n)+\langle {\mathcal F}^{\prime} (u_n),v-u_n \rangle
\end{equation}
\begin{equation*}
 \geq -\varepsilon_n\| v-u_n \|_{\infty}, \quad \mbox{ for all }v \in D(\Psi + J),
\end{equation*}

\medskip
\noindent where $\varepsilon_n \to 0$. The functional ${\mathcal E}$ is said to satisfy the \textit{(PS)-condition} if any (PS)-sequence has a convergent subsequence in $C$.

\medskip
\begin{theorem}\label{critpoint}
Any critical point of ${\mathcal E}$ is a solution of problem \eqref{sysp}-\eqref{bcsysp}.
\end{theorem}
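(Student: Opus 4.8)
The plan is to deduce the result directly from Proposition~\ref{minim}, by absorbing the nonlinear term $\nabla_u F$ into the ``datum'' $h$. Let $u\in D(\Psi+J)$ be a critical point of ${\mathcal E}$, so that \eqref{critpointdef} holds. Define
$$h(t):=u(t)+\nabla_u F(t,u(t))\qquad(t\in[0,T]).$$
Since $u\in D(\Psi+J)\subset{\mathcal K}\subset C$ and $\nabla_u F$ is continuous on $[0,T]\times\mathbb{R}^N$ by $(H_F)$, we have $h\in C$, so that the problem $[{\mathcal P}_{\partial j} (h)]$ and Proposition~\ref{minim} are available for this $h$.

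The key point is that, for this particular choice of $h$, the critical point inequality \eqref{critpointdef} is \emph{identical} to the variational inequality \eqref{varinequality}. Indeed, from the explicit formula $\langle{\mathcal F}^{\prime}(u),v-u\rangle=-\int_0^T\langle\nabla_u F(t,u)\,|\,v-u\rangle\,dt$ and the definition of $h$ we get, for every $v\in D(\Psi+J)$,
$$\langle{\mathcal F}^{\prime}(u),v-u\rangle=\int_0^T\langle u-h\,|\,v-u\rangle,$$
so that \eqref{critpointdef} reads precisely
$$\int_0^T\left[\Phi(v^{\prime})-\Phi(u^{\prime})\right]+J(v)-J(u)+\int_0^T\langle u-h\,|\,v-u\rangle\geq0,\qquad\mbox{for all }v\in D(\Psi+J),$$
which is \eqref{varinequality}. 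Since $u\in D(\Psi+J)$, the uniqueness statement in Proposition~\ref{minim} forces $u=u_h$, the unique solution of $[{\mathcal P}_{\partial j} (h)]$.

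It remains only to spell out what $u=u_h$ being a solution of $[{\mathcal P}_{\partial j} (h)]$ entails. By the very definition of such a solution, $u\in C^1$ with $|u^{\prime}(t)|<a$ for all $t\in[0,T]$, $\phi(u^{\prime})\in C^1$, $(u(0),u(T))\in D(\partial j)$, and $u$ satisfies both
$$-\left[\phi(u^{\prime})\right]^{\prime}+u=h=u+\nabla_u F(\cdot,u)\quad\mbox{on }[0,T]$$
and $(\phi(u^{\prime})(0),-\phi(u^{\prime})(T))\in\partial j(u(0),u(T))$. Cancelling $u$ on both sides of the differential equation gives $-\left[\phi(u^{\prime})\right]^{\prime}=\nabla_u F(\cdot,u)$, i.e.\ \eqref{sysp}, while the inclusion is exactly \eqref{bcsysp}. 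Hence $u$ is a solution of \eqref{sysp}-\eqref{bcsysp}.

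There is essentially no serious obstacle at this stage: all the analytic work has already been done in Section~\ref{sectiunea2} (existence and uniqueness for the auxiliary problem) and in Proposition~\ref{minim} (its variational characterisation). The only mild care needed is to verify that $h\in C$ so that Proposition~\ref{minim} applies, and to note that the strong regularity of $u_h$ — membership in $C^1$, the property $\phi(u^{\prime})\in C^1$, and the \emph{strict} bound $|u^{\prime}|<a$ — is automatically transferred to the critical point through the identification $u=u_h$, even though a priori $u$ was only known to lie in $D(\Psi+J)\subset{\mathcal K}$.
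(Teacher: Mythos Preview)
Your proof is correct and follows essentially the same approach as the paper: set $h=u+\nabla_u F(\cdot,u)$, recognise \eqref{critpointdef} as the variational inequality \eqref{varinequality} for this $h$, and invoke Proposition~\ref{minim} to conclude $u=u_h$. The paper's own proof is terser but identical in substance; your additional remarks on why $h\in C$ and on the regularity transfer from $u_h$ to $u$ are helpful elaborations but not new ideas.
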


\begin{proof}
Let $u\in D(\Psi + J)$ be a critical point of ${\mathcal E}$. By virtue of \eqref{critpointdef} it holds
$$\displaystyle\int_0^T \! \!  \left [\Phi(v^{\prime}) -\Phi (u^{\prime}) \right ]+J(v)-J(u)+\int_0^T \! \! \langle u-[u+ \nabla_u F(t,u)] \, |  v-u \rangle \geq 0, \quad \mbox{for all }v \in D(\Psi + J)$$
which is a variational inequality of type \eqref{varinequality} with $h=u+ \nabla_u F(t,u)$. From Proposition \ref{minim} we have that $u$ solves $[{\mathcal P}_{\partial j} (u+ \nabla_u F(t,u))]$, which actually means that it is a solution of \eqref{sysp}-\eqref{bcsysp}.
\end{proof}

\section{Minimum energy and saddle-point solutions}\label{sectiunea4}

\noindent We introduce the first eigenvalue-like constant
\begin{equation}\label{feig}
\lambda_1=\lambda_1(a,j):= \inf \left \{ \frac{\|v^{\prime} \|_{L^2}^2}{\|v \|_{L^2}^2} \, : \, v\in  D(\Psi + J), \; v \not\equiv 0_{\mathbb{R}^N}  \right \}.
\end{equation}

\begin{theorem}\label{l1poz} If $\lambda_1>0$ then the following are true:

\medskip
(i) for any $v\in  D(\Psi + J)$ one has
\begin{equation}\label{overl1}
|v(t)| \leq a \left ( \frac{1}{\sqrt{\lambda_1}} +T \right ) \qquad (t\in [0,T]) ;
\end{equation}

\medskip
(ii) ${\mathcal E}$ satisfies the (PS)-condition;

\medskip
(iii) ${\mathcal E}$ is bounded from below and attains its infimum at some $u\in D(\Psi + J)$, which is a critical
point of ${\mathcal E}$ and hence, a solution of problem \eqref{sysp}-\eqref{bcsysp}.
\end{theorem}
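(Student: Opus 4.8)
The plan is to establish the three assertions in the order they are stated, since each one feeds into the next.

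\textbf{Part (i).} The hypothesis $\lambda_1>0$ means that $\|v'\|_{L^2}^2 \geq \lambda_1 \|v\|_{L^2}^2$ for every $v\in D(\Psi+J)$, hence $\|v\|_{L^2} \leq \lambda_1^{-1/2}\|v'\|_{L^2}$. Since every $v\in D(\Psi+J)\subset \mathcal{K}$ satisfies $\|v'\|_{L^\infty}\leq a$, we get $\|v'\|_{L^2}\leq a\sqrt{T}$ and therefore $\|v\|_{L^2}\leq a\sqrt{T}/\sqrt{\lambda_1}$. Now invoke Remark \ref{remark22}$(i)$ — more precisely inequality \eqref{ineqw1}, which is valid for any $v\in W^{1,\infty}$ with $\|v'\|_{L^\infty}\leq a$ — to obtain $|v(t)|\leq \frac{1}{\sqrt{T}}\|v\|_{L^2}+Ta \leq a(\frac{1}{\sqrt{\lambda_1}}+T)$ for all $t\in[0,T]$, which is \eqref{overl1}. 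This part is routine.

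\textbf{Part (ii).} Let $\{u_n\}\subset D(\Psi+J)$ be a (PS)-sequence. By part (i), $\{u_n\}$ is uniformly bounded in $C$, and since $\|u_n'\|_{L^\infty}\leq a$ the sequence is equi-Lipschitz, hence by Arzelà–Ascoli a subsequence converges uniformly in $C$ to some $u$; moreover $u_n'\rightharpoonup u'$ weak-$*$ in $L^\infty$ along a further subsequence and $u\in\mathcal{K}$ since $\mathcal{K}$ is weak-$*$ closed (it is convex and strongly closed in $W^{1,\infty}$, or one argues directly). The set $D(\Psi+J)$ is closed in $C$ because $\Psi+J$ is lower semicontinuous, so $u\in D(\Psi+J)$. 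The convergence $u_n\to u$ in $C$ is exactly the conclusion needed for the (PS)-condition, so in fact this is the whole of part (ii) — no further work with the defining inequality \eqref{PSseq} is required, because the (PS)-condition only asks for a convergent subsequence in $C$, and that follows purely from the a priori bound of part (i) together with equi-Lipschitz continuity. (One should double-check that the definition in the paper indeed only demands $C$-convergence; if so, this step is short.)

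\textbf{Part (iii).} Boundedness from below: $\Psi+J\geq \inf \Psi > -\infty$ (recall $\Phi$ maps into $(-\infty,0]$ and is continuous on the compact $\overline{B}_a$, so $\Psi$ is bounded below, and $J\geq 0$), while $\mathcal{F}(v)=-\int_0^T F(t,v)\,dt$; for $v\in D(\Psi+J)$ part (i) confines $v(t)$ to the compact ball of radius $a(\lambda_1^{-1/2}+T)$, on which $F$ is bounded (by continuity on $[0,T]\times\mathbb{R}^N$), so $\mathcal{F}$ is bounded on $D(\Psi+J)$, hence ${\mathcal E}=\Psi+J+\mathcal{F}$ is bounded below on $C$ (it is $+\infty$ off $D(\Psi+J)$). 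Now take a minimizing sequence $\{u_n\}$ for ${\mathcal E}$; it is automatically a (PS)-sequence (a minimizing sequence for an l.s.c. function of Szulkin type satisfies \eqref{PSseq} with suitable $\varepsilon_n\to 0$ by Ekeland's variational principle, which is the standard device here — cite \cite{Sz}), so by (ii) a subsequence converges in $C$ to some $u\in D(\Psi+J)$; lower semicontinuity of $\Psi+J$ and continuity of $\mathcal{F}$ give ${\mathcal E}(u)\leq\liminf {\mathcal E}(u_n)=\inf{\mathcal E}$, so $u$ is a minimizer. Finally, a minimizer of a functional of the form (l.s.c. convex)$+$($C^1$) is a critical point in the sense of \eqref{critpointdef} — this is the elementary convexity computation: for $v\in D(\Psi+J)$ and $t\in(0,1]$, expanding ${\mathcal E}(u+t(v-u))\geq{\mathcal E}(u)$, using convexity of $\Psi+J$ to bound $(\Psi+J)(u+t(v-u))\leq (1-t)(\Psi+J)(u)+t(\Psi+J)(v)$, dividing by $t$ and letting $t\downarrow 0$ yields \eqref{critpointdef}. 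Then Theorem \ref{critpoint} converts this critical point into a solution of \eqref{sysp}-\eqref{bcsysp}.

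The main obstacle is conceptually minor but must be handled carefully: in Part (ii) one must make sure the a priori $C$-bound really gives everything the (PS)-condition requires without ever touching the slope inequality \eqref{PSseq}; the delicate point is only that $\mathcal{K}$ (equivalently $D(\Psi+J)$) is closed in $C$ so the limit stays admissible, which the paper has already arranged via lower semicontinuity of $\Psi$. In Part (iii) the one step that deserves a sentence of care is the passage from "minimizing sequence" to "(PS)-sequence," which is exactly where Ekeland's principle enters in Szulkin's framework; everything else is bookkeeping.
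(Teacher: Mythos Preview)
Your proof is correct and follows essentially the same route as the paper: part~(i) is identical, part~(ii) is the same relative-compactness argument (the paper phrases it via the compact embedding $W^{1,\infty}\hookrightarrow C$ rather than Arzel\`a--Ascoli, and does not bother with the weak-$*$ or closedness considerations since mere $C$-convergence of a subsequence is all that is asked), and part~(iii) differs only in that the paper cites \cite[Theorem~1.7]{Sz} and \cite[Proposition~1.1]{Sz} directly where you unpack their content (Ekeland's principle and the convexity computation showing minimizers are critical points).
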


\begin{proof} $(i)$ Since $v\in {\mathcal K}$, it holds $\| v^{\prime} \|_{L^{2}} \leq a \sqrt{T}$ and by \eqref{feig} we have $\| v \|_{L^{2}} \leq a \sqrt{T/ \lambda_1}$.
Then \eqref{overl1} follows from \eqref{ineqw1}.

\medskip
$(ii)$ From \eqref{overl1} we infer that any $v\in  D(\Psi + J)$ satisfies
$$\|v\|_{W^{1, \infty }} \leq a \left ( \frac{1}{\sqrt{\lambda_1}} +T +1 \right ),$$
hence $D(\Psi + J)$ is bounded in $W^{1, \infty }$, and therefore relatively compact in $C$ -- by the compactness of the embedding $W^{1, \infty }\hookrightarrow C$. This clearly implies that
${\mathcal E}$ satisfies the (PS)-condition.

\medskip
$(iii)$ As $F$ is continuous on the compact set $[0,T] \times \overline{B}_{r}$, where $r=a \left ( {1}/{\sqrt{\lambda_1}} +T \right )$, from \eqref{overl1} we have that the mapping ${\mathcal F}$
is bounded on $D(\Psi + J)$. Thus, ${\mathcal E}$ is bounded from below on $C$. Then, using that ${\mathcal E}$ satisfies the (PS)-condition
and \cite[Theorem 1.7]{Sz}, there exists $u \in D(\Psi + J)$ such that ${\mathcal E}(u)= \inf _{C}{\mathcal E}.$ But, according to \cite[Proposition 1.1]{Sz} this means that $u$ is a critical point of ${\mathcal E}$,
and hence a solution of \eqref{sysp}-\eqref{bcsysp} by virtue of Theorem \ref{critpoint}.
\end{proof}

Having in view $(iii)$ in Theorem \ref{l1poz}, hypothesis $\lambda_1>0$ appears as being a sufficient one for the solvability  of  problem \eqref{sysp}-\eqref{bcsysp}. Thus, it becomes of interest to express this condition less technically. Doing this, we actually will provide an easier to be applied existence result.

\medskip
Let $\mbox {\sl cone}\, D(j)$ be the \textit{conical hull} of the convex set $D(j)$, that is
$${\sl cone}\, D(j):= \left \{ \alpha \, z \, : \, \alpha \geq 0, \, z\in D(j) \right \}.$$

\begin{corollary}\label{corl1} If   $\overline {\mbox {\sl cone}\, D(j)} \cap d^1_N = \left \{ 0_{\mathbb{R}^N \times \mathbb{R}^N} \right \}$ then problem \eqref{sysp}-\eqref{bcsysp} has at least one solution which is a minimizer of ${\mathcal E}$ on $C$.
\end{corollary}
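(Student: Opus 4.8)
The plan is to deduce the corollary from part (iii) of Theorem~\ref{l1poz}, so the whole task reduces to showing that the hypothesis $\overline{\mbox{\sl cone}\, D(j)} \cap d^1_N = \{0_{\mathbb{R}^N \times \mathbb{R}^N}\}$ forces $\lambda_1 > 0$. I would argue by contradiction: suppose $\lambda_1 = 0$. By definition \eqref{feig} of $\lambda_1$ there exists a sequence $\{v_n\} \subset D(\Psi + J)$ with $v_n \not\equiv 0$ and $\|v_n'\|_{L^2}^2 / \|v_n\|_{L^2}^2 \to 0$. After normalizing so that $\|v_n\|_{L^2} = 1$, we have $\|v_n'\|_{L^2} \to 0$; in particular $v_n' \to 0$ in $L^2$, hence in $L^1$.

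The key observation is that $(v_n(0), v_n(T)) \in D(j)$ for each $n$, because $v_n \in D(\Psi + J)$ (see \eqref{dompj}). Now I would extract uniform bounds on the endpoints: from the normalization $\|v_n\|_{L^2}=1$ and $\|v_n'\|_{L^\infty}\le a$ (recall $v_n \in \mathcal{K}$), inequality \eqref{ineqw1} gives $|v_n(t)| \le 1/\sqrt{T} + Ta$ for all $t$, so $\{(v_n(0),v_n(T))\}$ is bounded in $\mathbb{R}^N \times \mathbb{R}^N$. Passing to a subsequence, $(v_n(0), v_n(T)) \to (p, q) \in \overline{D(j)} \subset \overline{\mbox{\sl cone}\, D(j)}$. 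Moreover $|v_n(T) - v_n(0)| \le \int_0^T |v_n'| \to 0$, so $p = q$, i.e. $(p,q) = (p,p) \in d^1_N$. By hypothesis this forces $p = 0_{\mathbb{R}^N}$, so $(v_n(0), v_n(T)) \to 0_{\mathbb{R}^N \times \mathbb{R}^N}$.

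It remains to derive a contradiction with $\|v_n\|_{L^2} = 1$. Writing $v_n(t) = v_n(0) + \int_0^t v_n'(s)\,ds$, we get $\|v_n\|_{L^\infty} \le |v_n(0)| + \|v_n'\|_{L^1} \to 0$, hence $\|v_n\|_{L^2} \le \sqrt{T}\,\|v_n\|_{L^\infty} \to 0$, contradicting the normalization. Therefore $\lambda_1 > 0$, and Theorem~\ref{l1poz}(iii) yields a solution of \eqref{sysp}-\eqref{bcsysp} that minimizes $\mathcal{E}$ on $C$.

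The only delicate point is making sure the normalized sequence stays inside $D(\Psi + J)$ so that we may legitimately use both $(v_n(0),v_n(T)) \in D(j)$ and $v_n \in \mathcal{K}$ — but scaling $v_n$ by a positive constant keeps $\|v_n'\|_{L^\infty} \le a$ only if the constant is $\le 1$, so rather than rescale I would instead work directly with the minimizing sequence from \eqref{feig} without normalization, set $t_n := \|v_n\|_{L^2} > 0$, and track ratios; alternatively one observes that the quantity $\overline{D(j)} \cap d^1_N$ being trivial is exactly what prevents a minimizing sequence from concentrating. Either way, the heart of the matter is the compactness argument combined with $|v_n(T)-v_n(0)| \le \|v_n'\|_{L^1}$, which ties the endpoint limit into $d^1_N$; this is precisely where the singular nature of $\phi$ (the a priori bound $\|v'\|_{L^\infty} \le a$) is used. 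I expect this endpoint-concentration step to be the main obstacle, and handling the normalization carefully to be the only real subtlety.
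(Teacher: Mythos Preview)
Your overall strategy --- reduce to $\lambda_1>0$ and invoke Theorem~\ref{l1poz}(iii) --- is exactly the paper's. The gap is at the normalization step, and it is slightly worse than you diagnose. After replacing $v_n$ by $v_n/\|v_n\|_{L^2}$ you lose \emph{both} pieces of information coming from $D(\Psi+J)$: not only the bound $\|v_n'\|_{L^\infty}\le a$ (which you flag), but also the inclusion $(v_n(0),v_n(T))\in D(j)$, which you still use in the body of the argument. Positive scaling sends $D(j)$ into $\mbox{\sl cone}\,D(j)$, not into $D(j)$; this is precisely why the hypothesis is stated with the conical hull. Your alternative of ``working without normalization'' does not close the argument either: if $\|v_n\|_{L^2}\to 0$ together with $\|v_n'\|_{L^2}\to 0$, the conclusion $v_n\to 0$ in $C$ yields no contradiction with the ratio tending to $0$.

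The clean fix is to normalize and then abandon $\mathcal K$ entirely: set $w_n=v_n/\|v_n\|_{L^2}$, so $\|w_n\|_{L^2}=1$, $\|w_n'\|_{L^2}\to 0$, and $(w_n(0),w_n(T))\in\mbox{\sl cone}\,D(j)$. Boundedness of $\{w_n\}$ in $C$ follows from the one-dimensional Sobolev embedding $W^{1,2}\hookrightarrow C$ (no need for \eqref{ineqw1}); then $|w_n(T)-w_n(0)|\le \sqrt{T}\,\|w_n'\|_{L^2}\to 0$ gives a subsequential limit $(p,p)\in\overline{\mbox{\sl cone}\,D(j)}\cap d_N^1=\{0\}$, and $\|w_n-w_n(0)\|_\infty\le \sqrt{T}\,\|w_n'\|_{L^2}\to 0$ forces $w_n\to 0$ in $L^2$, a contradiction. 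The paper packages this as follows: it passes to the larger set $M=\{v\in W^{1,2}:(v(0),v(T))\in\overline{\mbox{\sl cone}\,D(j)}\}$, cites a Poincar\'e-type inequality on such cones \cite[Theorem 3.1]{JePr1} to get the corresponding infimum $\underline\lambda_1>0$ on $M$, and concludes $\lambda_1\ge\underline\lambda_1>0$ since $D(\Psi+J)\subset M$. Your argument, once repaired as above, is essentially a direct proof of that cited inequality in this setting.
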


\begin{proof} Let $M:= \left \{ v\in W^{1,2}([0,T]; \mathbb{R}^N) \, : \, (v(0),v(T)) \in \overline {\mbox {\sl cone}\, D(j)} \right \}$ and
$$ \underline{\lambda}_1:=\inf\left \{ \frac{\|v^{\prime} \|_{L^2}^2}{\|v \|_{L^2}^2}
\, : \, v\in  M , \; v \not\equiv 0_{\mathbb{R}^N}  \right \}.$$
From \cite[Theorem 3.1]{JePr1} we know that $ \underline{\lambda}_1>0$.
Then, since $D(\Psi + J) \subset M $, it holds $\lambda_1 \geq  \underline{\lambda}_1>0$ and the conclusion follows by Theorem \ref{l1poz} $(iii)$.
\end{proof}

\medskip
\begin{remark}\label{remark4}{\em $(i)$ Having in view \eqref{dompj}, one could suspect that if instead of ${\sl cone}\, D(j)$ one would consider ${\sl cone}\, \left [D(j) \cap \overline{D}_{Ta} \right ]$, then the result from Corollary \ref{corl1} would be sharper. But this is only apparent because it is straightforward to verify that, due to the fact that the convex set $D(j)$ contains $0_{\mathbb{R}^N \times \mathbb{R}^N}$, actually it holds ${\sl cone}\, D(j)={\sl cone}\, \left [ D(j) \cap \overline{D}_{Ta} \right ]$.

\medskip
$(ii)$ It is well known that the Dirichlet problem \eqref{sysp}-\eqref{dirh} is always solvable (see e.g. \cite[Corollary 6]{Ber-Maw0}). We observe that this "universal" existence result (meaning that no additional assumptions on the data in the equation \eqref{sysp} are made) is in our case immediately from  Corollary \ref{corl1}. Another immediate consequence of this is that the antiperiodic problem \eqref{sysp}-\eqref{aperi} has a solution. Or, slightly more general than \eqref{aperi}, if $K$ is the subspace
$$Y=\{ (x,y) \in \mathbb{R}^N \times \mathbb{R}^N \, : \, ax=by \} \quad (a,b \in \mathbb{R}, \,  |a|+|b|>0)$$
then
$$N_K(z)=Y^{\perp}=\{ (x,y) \in \mathbb{R}^N \times \mathbb{R}^N \, : \, bx=-ay \}$$
and \eqref{ncon} yields
\begin{equation}\label{nconape}
au(0)=bu(T), \qquad b\phi \left( u^{\prime }\right)(0)=a \phi \left( u^{\prime }\right)(T).
\end{equation}
We get that if $a \neq b$ then problem \eqref{sysp}-\eqref{nconape} is always solvable.  Thus, Corollary \ref{corl1}, and a fortiori Theorem \ref{l1poz} $(iii)$, highlights a whole class of boundary conditions for which such an universal existence result is valid. Another class for which this situation occurs and in which it is allowed $\lambda_1=0$ is provided in Corollary \ref{boundproj} below.
}
\end{remark}

For $v\in C$, we denote
$$\overline{v}:=\frac{1}{T}\int_0^Tv(t)dt, \quad \tilde{v}:= v-\overline{v}.$$
If $v\in W^{1, \infty}$, then each component $\tilde{v}_i$ of $\tilde{v}$  vanishes at some $\xi_i \in [0,T]$ and hence
$$|\tilde{v}_i(t)|=|\tilde{v}_i(t)-\tilde{v}_i(\xi)|\leq \int_0^T|\tilde{v}'_i(\tau)|d \tau \leq T\|v'\|_{L^{\infty}} \quad (i=\overline{1,N}, \, t\in[0,T]),$$
so, one has that
\begin{equation}\label{estvp}
\|\tilde{v}\|_{\infty} \leq T\sqrt{N}\|v'\|_{L^{\infty}}.
\end{equation}
For any $\delta >0$, set
$$D_{\delta}(\Psi + J):=\left \{v \in D(\Psi + J) \, : \, |\overline{v}| \leq \delta \right \}.$$

\begin{theorem}\label{minlem}
Assume that there is some $\delta>0$ such that
\begin{equation}\label{infeq}
\displaystyle \inf _{D_{\delta}(\Psi + J)}{\mathcal E}=\inf _{D(\Psi + J)}{\mathcal E}.
\end{equation}
Then ${\mathcal E}$ is bounded from below on $C$ and attains its infimum at some $u\in D_{\delta}(\Psi + J)$, which is a solution of problem \eqref{sysp}-\eqref{bcsysp}.
\end{theorem}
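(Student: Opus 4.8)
The plan is to run the direct method exactly as in the proof of Theorem \ref{l1poz}, but with the global a priori bound \eqref{overl1} replaced by a uniform bound that is available only on the smaller set $D_\delta(\Psi+J)$; hypothesis \eqref{infeq} is precisely what allows the minimization to be confined to that set. The first step is to note that $D_\delta(\Psi+J)$ is bounded in $W^{1,\infty}$: if $v\in D_\delta(\Psi+J)$ then $v\in{\mathcal K}$, so $\|v'\|_{L^\infty}\le a$, and writing $v=\overline v+\tilde v$, estimate \eqref{estvp} gives $\|\tilde v\|_\infty\le Ta\sqrt N$; since $|\overline v|\le\delta$, one gets $\|v\|_\infty\le\rho:=\delta+Ta\sqrt N$ and hence $\|v\|_{W^{1,\infty}}\le\rho+a$ for every such $v$. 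By the compactness of the embedding $W^{1,\infty}\hookrightarrow C$, the set $D_\delta(\Psi+J)$ is relatively compact in $C$.

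Next I would verify that ${\mathcal E}$ is bounded from below on $C$. Since $F$ is bounded on the compact set $[0,T]\times\overline B_\rho$, the bound $\|v\|_\infty\le\rho$ makes ${\mathcal F}$ bounded on $D_\delta(\Psi+J)$; as $\Psi$ is bounded from below and $J\ge0$, the functional ${\mathcal E}=\Psi+J+{\mathcal F}$ is bounded from below on $D_\delta(\Psi+J)$. Because ${\mathcal E}\equiv+\infty$ off $D(\Psi+J)$, one has $\inf_C{\mathcal E}=\inf_{D(\Psi+J)}{\mathcal E}$, which by \eqref{infeq} equals $\inf_{D_\delta(\Psi+J)}{\mathcal E}>-\infty$.

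To produce the minimizer, I would take a minimizing sequence $\{u_n\}\subset D_\delta(\Psi+J)$ with ${\mathcal E}(u_n)\to m:=\inf_C{\mathcal E}$ and, by the relative compactness just established, extract a subsequence converging in $C$ to some $u$. Uniform convergence gives $\overline{u_n}\to\overline u$, so $|\overline u|\le\delta$; the lower semicontinuity of $\Psi$ and $J$ on $C$ together with the continuity of ${\mathcal F}$ yields ${\mathcal E}(u)\le\liminf_n{\mathcal E}(u_n)=m$, so ${\mathcal E}(u)<+\infty$, which places $u$ in $D(\Psi+J)$ and hence in $D_\delta(\Psi+J)$, with ${\mathcal E}(u)=m=\inf_C{\mathcal E}$. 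Being a global, in particular local, minimizer of the Szulkin-type functional ${\mathcal E}=(\Psi+J)+{\mathcal F}$ on $C$, the point $u$ is a critical point of ${\mathcal E}$ by \cite[Proposition 1.1]{Sz}, hence a solution of \eqref{sysp}-\eqref{bcsysp} by Theorem \ref{critpoint}.

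I do not expect a genuine obstacle here --- the argument is a routine direct-method variant of Theorem \ref{l1poz} --- but two points deserve attention: the uniform bound on $D_\delta(\Psi+J)$ is produced by combining the derivative constraint $\|v'\|_{L^\infty}\le a$ built into ${\mathcal K}$ with the average constraint $|\overline v|\le\delta$ through \eqref{estvp}, and the limit of the minimizing sequence must be checked to remain in $D_\delta(\Psi+J)$, which rests on the average bound being preserved under uniform limits and on the lower semicontinuity of $\Psi+J$, so that finiteness of ${\mathcal E}(u)$ alone suffices to place $u$ in $D(\Psi+J)$.
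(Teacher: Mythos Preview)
Your proof is correct and follows essentially the same route as the paper's: bound $D_\delta(\Psi+J)$ in $W^{1,\infty}$ via \eqref{estvp} and the average constraint, use compactness of $W^{1,\infty}\hookrightarrow C$ to extract a limit $u$ of a minimizing sequence, verify $|\overline u|\le\delta$ and invoke lower semicontinuity to place $u$ in $D_\delta(\Psi+J)$ with ${\mathcal E}(u)=\inf_C{\mathcal E}$, then conclude by \cite[Proposition 1.1]{Sz} and Theorem \ref{critpoint}. The paper's argument differs only in minor presentation (it notes directly that $u\in{\mathcal K}$ by closedness of ${\mathcal K}$ in $C$, whereas you recover $u\in D(\Psi+J)$ from ${\mathcal E}(u)<+\infty$).
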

\begin{proof}
By virtue of \eqref{infeq} and $\displaystyle \inf _{D(\Psi + J)}{\mathcal E}=\inf _{C}{\mathcal E}$, it suffices to show that there is some
$u\in D_{\delta}(\Psi + J)$ such that
\begin{equation}\label{rinf}
\displaystyle{\mathcal E}(u)=\inf _{D_{\delta}(\Psi + J)}{\mathcal E}.
\end{equation}
This ensures that $u$ is a minimum point of ${\mathcal E}$ on $C$ and, on account of \cite[Proposition 1.1]{Sz}, it will be a critical point of ${\mathcal E}$. The proof will be accomplished by virtue
of Theorem \ref{critpoint}.

\medskip
From \eqref{estvp}, if $v\in D_{\delta}(\Psi + J)$, then
$$|v(t)| \leq |\overline{v}|+|\tilde{v}(t)|\leq \delta + aT\sqrt{N} \quad (t\in[0,T]).$$
This implies that $D_{\delta}(\Psi + J)$ is bounded in $W^{1, \infty}$ and ${\mathcal E}$ is bounded from below on $D_{\delta}(\Psi + J).$  By the compactness of the embedding $W^{1, \infty }\hookrightarrow C$ we infer that $D_{\delta}(\Psi + J)$ is relatively compact in $C$. Let $\{u_n\} \subset D_{\delta}(\Psi + J)$
be a minimizing sequence for ${\mathcal E}$. Passing to a subsequence if necessary, we may assume that $\{u_n\}$ converges uniformly to some $u\in {\mathcal K}$ (recall, ${\mathcal K}$ is a closed subset of $C$).  It is easy to check that $|\overline{u}| \leq \delta$.
Finally, using that ${\mathcal E}$ is lower semicontinuous, we get
$${\mathcal E}(u)\leq \liminf_{n\to \infty}{\mathcal E}(u_n)=\lim_{n\to \infty}{\mathcal E}(u_n)=\inf _{D_{\delta}(\Psi + J)}{\mathcal E} \quad (<+\infty)$$
which shows that necessarily $u\in D_{\delta}(\Psi + J)$ and \eqref{rinf} holds true.
\end{proof}

\medskip
For $A\subset \mathbb{R}^N \times \mathbb{R}^N$, let $\Pi_i A$ ($i=1,2$) be the projections of $A$, that is
$$\Pi_i A=\left \{ x_i \in \mathbb{R}^N \, : \, (x_1,x_2)\in A \mbox{ for some } x_k\in \mathbb{R}^N , \, i \neq k \right \}.$$

\begin{corollary}\label{boundproj}
If at least one of the sets $\Pi _1D(j)$ or $\Pi_2D(j)$ is bounded, then problem \eqref{sysp}-\eqref{bcsysp} has a solution.
\end{corollary}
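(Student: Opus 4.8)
The plan is to deduce the result directly from Theorem \ref{minlem} by checking that its hypothesis \eqref{infeq} holds; in fact I expect to choose $\delta$ so large that $D_{\delta}(\Psi+J)$ coincides with the whole domain $D(\Psi+J)$, so that \eqref{infeq} becomes a tautology. Note that, unlike in Corollary \ref{corl1}, here no positivity of $\lambda_1$ is required, which is consistent with the announcement in Remark \ref{remark4} $(ii)$.

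Assume first that $\Pi_1 D(j)$ is bounded, say $|x|\le R$ for every $(x,y)\in D(j)$. Let $v\in D(\Psi+J)$ be arbitrary. By \eqref{dompj} we have $v\in{\mathcal K}$ and $(v(0),v(T))\in D(j)$, hence $|v(0)|\le R$ and $\|v'\|_{L^{\infty}}\le a$. Consequently, for every $t\in[0,T]$,
\[
|v(t)|\le |v(0)|+\int_0^t|v'(\tau)|\,d\tau\le R+Ta ,
\]
and therefore $|\overline{v}|\le\frac{1}{T}\int_0^T|v(t)|\,dt\le R+Ta$. If instead $\Pi_2 D(j)$ is bounded, the identical bound follows by writing $v(t)=v(T)-\int_t^Tv'(\tau)\,d\tau$. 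In either case, setting $\delta:=R+Ta$ we obtain $D_{\delta}(\Psi+J)=D(\Psi+J)$, so that \eqref{infeq} holds trivially.

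Theorem \ref{minlem} then yields that ${\mathcal E}$ is bounded from below on $C$ and attains its infimum at some $u\in D_{\delta}(\Psi+J)\subset D(\Psi+J)$, which is a solution of problem \eqref{sysp}-\eqref{bcsysp}. There is no genuine obstacle in this argument: the whole content is the a priori bound on $\overline{v}$, which is produced by combining boundedness of a single projection of $D(j)$ with the constraint $\|v'\|_{L^{\infty}}\le a$ already encoded in ${\mathcal K}$, after which Theorem \ref{minlem} does all the remaining work.
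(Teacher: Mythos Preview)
Your proof is correct and follows essentially the same route as the paper: bound $|v(0)|$ (or $|v(T)|$) by the assumed bound on a projection of $D(j)$, use $\|v'\|_{L^\infty}\le a$ to get a uniform bound on $|v(t)|$ and hence on $|\overline v|$, conclude that $D(\Psi+J)=D_{\delta}(\Psi+J)$ for $\delta=R+Ta$, and apply Theorem \ref{minlem}. The paper's argument is identical up to notation (it writes $c$ for your $R$ and treats only the $\Pi_1$ case explicitly, leaving the other to ``similar argument'').
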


\begin{proof} We assume that $\Pi _1D(j)$ is bounded; similar argument for the remaining case. Fix $c>0$ with $\Pi _1D(j)\subset \overline{B}_c.$ If $u\in D(\Psi + J)$, then from $(u(0), u(T))\in D(j)$ and
$\|u'\|_{L^{\infty}}\leq a$
it follows
$$|u(t)| \leq |u(0)|+\int_0^T|u'| d\tau \leq c+Ta,$$
which gives $|\overline{u}|\leq c+Ta$. We deduce that $D(\Psi + J)=D_{c+Ta}(\Psi + J)$ and Theorem \ref{minlem} applies with $\delta=c+Ta$.
\end{proof}

\medskip
\begin{corollary}\label{Fperiodic} Assume that:

\medskip (i) if $z\in D(j)$ then $z+\zeta \in D(j)$ and $j(z+\zeta)=j(z)$ for all $\zeta \in d_N^1$;

\medskip (ii) $F(t, u)$ is $\omega_i$-periodic ($\omega_i>0$) with respect to each $u_i$ ($i =\overline{1,N}$), for all $t\in[0,T]$.

\medskip
\noindent Then, for any given  $h\in C$ with $\overline{h}=0$, the system
\begin{equation}\label{sysper1}
-\left[ \phi(u^{\prime}) \right] ^{\prime}=\nabla_u F(t,u)+h(t) \quad \mbox{in }[0,T],
\end{equation}
has at least one solution satisfying the boundary condition \eqref{bcsysp}.
\end{corollary}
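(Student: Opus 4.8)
The plan is to bring the problem into the framework of Theorem \ref{minlem} by showing that, under hypotheses $(i)$ and $(ii)$, the energy functional ${\mathcal E}$ associated with the modified system \eqref{sysper1}--\eqref{bcsysp} has its infimum over $D(\Psi + J)$ realized already on a set $D_\delta(\Psi + J)$ for a suitable $\delta>0$; then Theorem \ref{minlem} delivers the desired solution. To set things up, I would first observe that \eqref{sysper1}--\eqref{bcsysp} fits the abstract scheme with the potential $\widetilde F(t,u) := F(t,u) + \langle h(t) \mid u \rangle$, whose gradient is $\nabla_u F(t,u) + h(t)$; the associated energy is
$${\mathcal E}(v) = \Psi(v) + J(v) - \int_0^T F(t,v)\,dt - \int_0^T \langle h \mid v \rangle\,dt.$$
Note $\widetilde F$ need not satisfy $\widetilde F(\cdot,0_{\mathbb{R}^N})=0$, but this normalization plays no role in the variational machinery — only continuity of $\widetilde F$ and its gradient matter for Proposition \ref{minim} and Theorem \ref{critpoint}, and these are clearly still in force. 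Alternatively one keeps the normalization by subtracting a constant, which does not affect critical points.

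The key computation is a periodicity/translation invariance of ${\mathcal E}$. Given $v \in D(\Psi + J)$, decompose $v = \overline v + \tilde v$. For each component index $i$ choose an integer $k_i$ so that the $i$-th component of $\overline v$ shifted by $k_i \omega_i$ lands in $[0,\omega_i)$; let $\zeta \in d_N^1$ be the constant vector-pair $((k_1\omega_1,\dots,k_N\omega_N),(k_1\omega_1,\dots,k_N\omega_N))$ (viewed appropriately), equivalently translate $v$ by the constant function $c := (k_1\omega_1,\dots,k_N\omega_N)\in\mathbb{R}^N$. Set $w := v - c$. Then $w' = v'$, so $\Psi(w) = \Psi(v)$ and $w \in {\mathcal K}$; by hypothesis $(i)$, $(w(0),w(T)) = (v(0),v(T)) - \zeta \in D(j)$ with $J(w) = J(v)$; by hypothesis $(ii)$, $\int_0^T F(t,w)\,dt = \int_0^T F(t,v)\,dt$ since each component is shifted by an integer multiple of its period; and finally $\int_0^T \langle h \mid w\rangle = \int_0^T \langle h \mid v\rangle - T\langle h \mid \overline v - \overline w\rangle$... — here I use $\overline h = 0$, which kills the extra term: $\int_0^T\langle h\mid c\rangle\,dt = \langle \int_0^T h\mid c\rangle = 0$. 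Hence ${\mathcal E}(w) = {\mathcal E}(v)$ while, by construction, each component of $\overline w$ lies in $[0,\omega_i)$, so $|\overline w| \le \sqrt{\sum_i \omega_i^2} =: \delta$.

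It follows that $\inf_{D_\delta(\Psi+J)}{\mathcal E} = \inf_{D(\Psi+J)}{\mathcal E}$: indeed "$\le$" is trivial, and for any $v \in D(\Psi+J)$ the translate $w$ constructed above lies in $D_\delta(\Psi+J)$ with ${\mathcal E}(w) = {\mathcal E}(v)$, giving "$\ge$". Thus \eqref{infeq} holds with this $\delta$, and Theorem \ref{minlem} yields $u \in D_\delta(\Psi+J)$ minimizing ${\mathcal E}$ on $C$, which by Theorem \ref{critpoint} (applied to the system with potential $\widetilde F$) is a solution of \eqref{sysper1}--\eqref{bcsysp}. The main obstacle is purely bookkeeping: one must be careful that the integer shifts $k_i$ depend measurably (here just pointwise in a trivial finite-dimensional way) on $v$ and that the shift vector $c$ is genuinely a constant pair in $d_N^1$ so that hypothesis $(i)$ applies verbatim; and one must invoke $\overline h = 0$ precisely at the point where the linear term $\int_0^T\langle h\mid v\rangle$ could otherwise fail to be translation invariant. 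Everything else is a direct verification.
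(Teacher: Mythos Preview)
Your proposal is correct and follows essentially the same route as the paper: reduce to Theorem \ref{minlem} by translating any $v\in D(\Psi+J)$ by an integer-period constant so that its mean lands in a fixed cube, using hypothesis $(i)$ for $J$, hypothesis $(ii)$ for the $F$-term, and $\overline h=0$ for the linear term. The only cosmetic differences are that the paper takes $\delta=\omega\sqrt{N}$ with $\omega=\max_i\omega_i$ (versus your $\sqrt{\sum_i\omega_i^2}$), and that your worry about the normalization $\widetilde F(\cdot,0_{\mathbb{R}^N})=0$ is actually moot, since $F(\cdot,0_{\mathbb{R}^N})=0$ by $(H_F)$ and $\langle h(t)\mid 0_{\mathbb{R}^N}\rangle=0$.
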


\begin{proof}
Setting
$$F_h(t,x):=F(t,x)+\langle h(t) |x \rangle \, \quad \left ( \, (t,x)\in [0,T] \times \mathbb{R}^N \, \right ),$$
it is easily seen that $F_h$ satisfies $(H_{F_h})$ -- that is $(H_{F})$ with $F_h$ instead of $F$, and thus attached with \eqref{sysper1}-\eqref{bcsysp}  will be the energy functional
${\mathcal E}_h:C \to (-\infty, +\infty]$ given by
$${\mathcal E}_h(u)=\Psi(u) + J(u) -\int_0^TF(t,u)dt- \int_0^T\langle h(t) |u \rangle dt \, \quad (u\in C).$$
From $(i)$ we deduce that if $u\in D(J)$ then, for any $y\in \mathbb{R}^N$, one has  $u+y \in D(J)$ and $J(u)=J(u+y)$.
Denote by $e_1, \dots, e_N$ the canonical basis in $\mathbb{R}^N$ and set $\omega:=\max \{ \omega_i \, : \, i=\overline{1,N} \}$. Let $u\in D( \Psi + J)$ and $k_i \in \mathbb{Z}$ be such that
$\langle \overline{u} | e_i \rangle -k_i \, \omega_i \in [0, \omega_i)$ ($i=\overline{1,N}$); such a family of $k_i$ exists and is unique.
Using the periodicity of $F$  and $\overline{h}=0$ we get
\begin{equation}\label{perErond}
{\mathcal E}_h(u)={\mathcal E}_h \left (u-\sum_{i=1}^{N}k_i \, \omega_i \, e_i \right ).
\end{equation}
Put
$$ \overline{u}^0:= \sum_{i=1}^N \left ( \langle \overline{u} | e_i \rangle -k_i \, \omega_i \right )e_i$$
and define $v:= \overline{u}^0+\tilde{u}$. As $\overline{v}=\overline{u}^0$ and $\tilde{v}=\tilde{u}$, from \eqref{perErond} it follows
$${\mathcal E}_h(u)={\mathcal E}_h \left (\tilde{u}+\overline{u}-\sum_{i=1}^{N}k_i \, \omega_i \, e_i \right )={\mathcal E}_h \left (\tilde{v}+\overline{v} \right )={\mathcal E}_h(v).$$
Since $|\overline{v}|\leq \omega \sqrt{N}$, we obtain
$$\left \{{\mathcal E}_h(u)\, : \, u\in D( \Psi + J) \right \}=\left \{{\mathcal E}_h(v)\, : \, v\in D_{\omega \sqrt{N}}( \Psi + J) \right \}$$
and Theorem \ref{minlem} yields the conclusion.
\end{proof}

\medskip
We introduce the set
$$S_{\sigma}:=\left\{ \begin{array}{clrc}
d_N^1 & \hbox{if $\sigma=0$}, \\
\overline{D}_{\sigma} & \hbox{if $0<\sigma <\infty$},\\
\mathbb{R}^N \times \mathbb{R}^N  & \hbox{if $\sigma =\infty$}
\end{array}
\right.$$
and analyze the boundary condition \eqref{ncon} with $K=S_{\sigma}$ ($0\leq \sigma \leq \infty$), under the additional assumption on $g=g(x,y)$ that $\nabla g= \left ( \nabla_x g  \, , \, \nabla_y g \right )$ exists and  is continuous.
Then \eqref{ncon} becomes

\begin{equation}\label{nconstrip}
\left\{ \begin{array}{clrc}
& \! \! \! \! \! \left ( u(0), u(T) \right ) \in S_{\sigma}, \\
&\vspace{0.0cm}\\
& \! \! \! \! \!  \left ( \phi \left( u^{\prime }\right)(0)-\nabla _x g ( u(0), u(T)), -\phi \left( u^{\prime }\right)(T)-\nabla _y g ( u(0), u(T))\right ) \in N_{S_{\sigma}}(u(0), u(T))
\end{array}
\right.
\end{equation}
and, recall it is nothing else but condition \eqref{bcsysp} with
\begin{equation}\label{partjS}
j=g+I_{S_{\sigma}}.
\end{equation}

\medskip
\begin{proposition}\label{condsigma} Assume that $g=g(x,y):\mathbb{R}^N \times \mathbb{R}^N \to \mathbb{R}$ is convex, $\nabla g= \left ( \nabla_x g  \, , \, \nabla_y g \right )$ exists and is continuous,
$\nabla_x g(0_{\mathbb{R}^N \times \mathbb{R}^N}) = 0_{\mathbb{R}^N}=\nabla_y g(0_{\mathbb{R}^N \times \mathbb{R}^N})$
and  $g(0_{\mathbb{R}^N \times \mathbb{R}^N})=0$. Let $u$ be a solution of system \eqref{sysp} $($or \eqref{sysper1} with $h\in C$$)$ subject to the boundary condition \eqref{nconstrip}. Then the following are true:

\medskip (i) if $\sigma=0$ then $u$ satisfies
\begin{equation}\label{persteklov}
\left\{ \begin{array}{clrc}
& \! \! \! \! \! u(0)= u(T), \\
& \! \! \! \! \!  \phi \left( u^{\prime }\right)(0)-\nabla _x g ( u(0), u(T))=\phi \left( u^{\prime }\right)(T)+\nabla _y g ( u(0), u(T));
\end{array}
\right.
\end{equation}

\medskip (ii) if $T a \leq \sigma \leq \infty$ then $u$ satisfies the Neumann-Steklov type boundary condition
\begin{equation}\label{neumsteklov}
\phi \left( u^{\prime }\right)(0)=\nabla _x g ( u(0), u(T)), \quad \phi \left( u^{\prime }\right)(T)=-\nabla _y g ( u(0), u(T));
\end{equation}

\medskip (iii) if $0<\sigma<Ta $ then $u$ satisfies
\begin{equation}\label{bcondsigma}
\left\{ \begin{array}{clrc}
\mbox{ either } |u(0)-u(T)|< \sigma  \mbox{ and }  \\
 \phi \left( u^{\prime }\right)(0)=\nabla _x g ( u(0), u(T)), \quad \phi \left( u^{\prime }\right)(T)=-\nabla _y g ( u(0), u(T)) \\
\mbox{ or }\\
|u(0)-u(T)| = \sigma  \mbox{ and } \\
\phi \left( u^{\prime }\right)(0)-\nabla _x g ( u(0), u(T)) = s (u(0)-u(T))=\phi \left( u^{\prime }\right)(T)+\nabla _y g ( u(0), u(T)) \\
 \mbox{ for some }s\geq 0.
\end{array}
\right.
\end{equation}

\end{proposition}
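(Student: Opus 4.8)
The core of the argument is the explicit description of the normal cone $N_{S_\sigma}(z)$ at a point $z=(u(0),u(T))\in S_\sigma$, combined with the a priori bound \eqref{dirstrip}, namely $|u(0)-u(T)|<Ta$, which every solution of \eqref{sysp} (or \eqref{sysper1}) must satisfy. Writing the second line of \eqref{nconstrip} as
$$\left(\phi(u')(0)-\nabla_x g(u(0),u(T))\,,\,-\phi(u')(T)-\nabla_y g(u(0),u(T))\right)\in N_{S_\sigma}(u(0),u(T)),$$
I would split into the three cases according to the value of $\sigma$, in each case identifying $S_\sigma$ as a closed convex set and reading off its normal cone.

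For $(i)$, $S_0=d_N^1$ is a linear subspace, so $N_{d_N^1}(z)=(d_N^1)^\perp=\{(\xi,-\xi):\xi\in\mathbb{R}^N\}$ for every $z\in d_N^1$. The membership in $\eqref{nconstrip}$ then forces $u(0)=u(T)$ (so $z\in d_N^1$) together with $\phi(u')(0)-\nabla_x g=-(-\phi(u')(T)-\nabla_y g)$, which is exactly the second line of \eqref{persteklov}. For $(ii)$, when $Ta\le\sigma\le\infty$, the a priori estimate $|u(0)-u(T)|<Ta\le\sigma$ (with the convention that $|x-y|<\infty$ always) shows that $(u(0),u(T))$ lies in the \emph{interior} of $S_\sigma$ — for $\sigma=\infty$, $S_\sigma$ is the whole space; for finite $\sigma\ge Ta$, $(u(0),u(T))\in D_\sigma=\operatorname{int}S_\sigma$. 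Hence $N_{S_\sigma}(u(0),u(T))=\{0_{\mathbb{R}^N\times\mathbb{R}^N}\}$ and \eqref{nconstrip} collapses to $\phi(u')(0)=\nabla_x g(u(0),u(T))$ and $-\phi(u')(T)=\nabla_y g(u(0),u(T))$, i.e. \eqref{neumsteklov}. This case is where the singularity of $\phi$ is genuinely exploited: it is precisely relation \eqref{stripsol}/\eqref{dirstrip} that makes a fixed-size strip eventually irrelevant to the boundary condition.

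For $(iii)$, $0<\sigma<Ta$, the point $z=(u(0),u(T))$ may be either in the interior or on the boundary of $\overline{D}_\sigma$. If $|u(0)-u(T)|<\sigma$ then again $z\in\operatorname{int}\overline{D}_\sigma$, the normal cone is trivial, and we land in the first alternative of \eqref{bcondsigma}. If $|u(0)-u(T)|=\sigma$, I would compute $N_{\overline{D}_\sigma}(z)$ at a boundary point: since $\overline{D}_\sigma=\{(x,y):|x-y|\le\sigma\}$ is the sublevel set of the smooth-away-from-the-diagonal convex function $g_0(x,y):=|x-y|$, and the constraint is active with $\nabla g_0(x,y)=\bigl(\tfrac{x-y}{|x-y|},-\tfrac{x-y}{|x-y|}\bigr)$, the normal cone is the ray $N_{\overline{D}_\sigma}(z)=\{\lambda(x-y,-(x-y)):\lambda\ge0\}$; equivalently $\{(s(u(0)-u(T)),-s(u(0)-u(T))):s\ge0\}$ after absorbing $|u(0)-u(T)|=\sigma$ into the multiplier. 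Substituting this into \eqref{nconstrip} gives $\phi(u')(0)-\nabla_x g=s(u(0)-u(T))$ and $-\phi(u')(T)-\nabla_y g=-s(u(0)-u(T))$ for some $s\ge0$, which is the second alternative of \eqref{bcondsigma}.

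The only mildly delicate point — and the one I would treat most carefully — is the computation of $N_{\overline{D}_\sigma}(z)$ at boundary points in case $(iii)$: one must justify that the normal cone is the single ray generated by $\nabla g_0$ and nothing more. This follows from the standard fact that for a convex set of the form $\{w:g_0(w)\le\sigma\}$ with $g_0$ convex, continuous, and differentiable at a boundary point $z$ with $g_0(z)=\sigma$ and $\nabla g_0(z)\ne0$, one has $N(z)=\{\lambda\nabla g_0(z):\lambda\ge0\}$ — and here $\nabla g_0(z)\ne 0$ precisely because $|u(0)-u(T)|=\sigma>0$. Alternatively, one can invoke $\partial I_{\overline{D}_\sigma}=N_{\overline{D}_\sigma}$ together with $\partial(I_{\overline{D}_\sigma})(z)$ computed directly from the definition of the subdifferential. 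Everything else is bookkeeping: translating $\partial j=\partial(g+I_{S_\sigma})=\nabla g+N_{S_\sigma}$ (valid since $g$ is finite, convex and everywhere differentiable, so the sum rule for subdifferentials applies without a qualification condition) and plugging into \eqref{bcsysp}.
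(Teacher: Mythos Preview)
Your proposal is correct and follows essentially the same case analysis as the paper: compute $N_{S_\sigma}$ at $(u(0),u(T))$, invoke the a priori bound $|u(0)-u(T)|<Ta$ from \eqref{dirstrip} to force the point into the interior when $\sigma\ge Ta$, and split the remaining case $0<\sigma<Ta$ according to whether the boundary is hit. The only cosmetic difference is your justification of $N_{\overline{D}_\sigma}(z)=\{s(z_1-z_2,z_2-z_1):s\ge 0\}$ at a boundary point via the sublevel-set normal-cone formula for differentiable convex $g_0$, whereas the paper proves this identity by a direct Cauchy--Schwarz argument (relegated to an appendix).
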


\begin{proof}
We discuss all the possible cases with respect to  $\sigma \in [0,\infty ]$.

\medskip
\noindent $\underline{\mbox{\textit{Case} }\sigma=0}$:  $S_0=d_N^1$, $N_{S_0}(z)=d_N^2$ if $z\in d_N^1$ and  \eqref{nconstrip} becomes  \eqref{persteklov}.

\medskip
\noindent $\underline{\mbox{\textit{Case} }\sigma= \infty}$:  $S_{\infty}=\mathbb{R}^N \times \mathbb{R}^N$, $N_{S_{\infty}}(z)=\left \{ 0_{\mathbb{R}^N \times \mathbb{R}^N} \right \}$ for all $z\in \mathbb{R}^N \times \mathbb{R}^N$
and \eqref{nconstrip} gives  \eqref{neumsteklov}.

\medskip
\noindent $\underline{\mbox{\textit{Case} }0 < \sigma<\infty}$: $S_{\sigma}=\overline{D}_{\sigma}=\{z=(z_1,z_2) \, : \, |z_1 -z_2| \leq \sigma \}$,
\begin{equation}\label{connormal}
N_{S_{\sigma}}(z)=N_{\overline{D}_{\sigma}}(z)=\left\{ \begin{array}{clrc}
\{ 0_{\mathbb{R}^N \times \mathbb{R}^N} \} & \hbox{if $|z_1-z_2|< \sigma$}, \\
\left \{ s (z_1-z_2, z_2-z_1) \, : \, s\geq 0 \right \} & \hbox{if $|z_1-z_2|=\sigma $},
\end{array}
\right. \, (z\in S_{\sigma}).
\end{equation}
For the convenience of the reader, we have included in the Appendix section a proof of \eqref{connormal} when $|z_1-z_2|=\sigma $.

\medskip
\noindent $\underline{\mbox{\textit{Subcase} }Ta \leq \sigma<\infty}$: As $|u(0)-u(T)|<Ta\leq \sigma$, from \eqref{connormal} we have $N_{S_{\sigma}}(u(0),u(T))=\{ 0_{\mathbb{R}^N \times \mathbb{R}^N} \}.$
It follows that $u$ satisfies  \eqref{neumsteklov}.

\medskip
\noindent $\underline{\mbox{\textit{Subcase} }0 < \sigma<Ta}$: If $|u(0)-u(T)|< \sigma$, then $u$ satisfies  \eqref{neumsteklov} -- as above,
while if $|u(0)-u(T)|= \sigma$
from \eqref{connormal} it holds
$$N_{S_{\sigma}}(u(0),u(T))=\left \{ s (u(0)-u(T), u(T)-u(0)) \, : \, s\geq 0 \right \}$$
and \eqref{nconstrip} yields
$$ \phi \left( u^{\prime }\right)(0)-\nabla _x g ( u(0), u(T)) = s (u(0)-u(T))=\phi \left( u^{\prime }\right)(T)+\nabla _y g ( u(0), u(T)) \mbox{ for some }s\geq 0.$$
Therefore, in this subcase $u$ satisfies \eqref{bcondsigma}.

\end{proof}

\medskip
\begin{example}\label{primex}{\em
Let $f:\mathbb{R}^N \to \mathbb{R}$ be of class $C^1$, convex, with $\nabla f (0_\mathbb{R}^N)=0_{\mathbb{R}^N}$ and $f(0_{\mathbb{R}^N})=0.$
Setting $g(x,y):=f(x-y)$ ($x,y \in \mathbb{R}^N$), we have that $g$ is convex, of class $C^1$ on $\mathbb{R}^N \times \mathbb{R}^N$,
$$\nabla g(x,y)= \left ( \nabla f(x-y), -\nabla f(x-y) \right ),$$
$g|_{d_N^1}\equiv 0$ and $\nabla g|_{d_N^1}\equiv0_{\mathbb{R}^N \times \mathbb{R}^N}.$

\medskip
Observe that with this $g$, function  $j$ in \eqref{partjS} satisfies condition $(i)$ from Corollary \ref{Fperiodic}. Thus, under the assumptions on  $F$ and $h$ in Corollary \ref{Fperiodic}, using Proposition \ref{condsigma}
we deduce that system \eqref{sysper1} has at least one solution satisfying one of the boundary conditions: the periodic \eqref{peri},
\begin{equation}\label{neumfxy}
\phi \left( u^{\prime }\right)(0)=\nabla f ( u(0)- u(T))=\phi \left( u^{\prime }\right)(T)
\end{equation}
or, for any $0<\sigma <Ta$,
\begin{equation}\label{bcondsigmafxy}
\left\{ \begin{array}{clrc}
\mbox{ either } |u(0)-u(T)|< \sigma  \mbox{ and }  \\
 \phi \left( u^{\prime }\right)(0)=\nabla f ( u(0)- u(T))=\phi \left( u^{\prime }\right)(T) \\
\mbox{ or }\\
|u(0)-u(T)| = \sigma  \mbox{ and } \\
\phi \left( u^{\prime }\right)(0)=\nabla  f ( u(0)- u(T)) + s (u(0)-u(T))=\phi \left( u^{\prime }\right)(T) \\
 \mbox{ for some }s\geq 0.
\end{array}
\right.
\end{equation}
For instance, if $f(x)=(e^{|x|^2}-1)/2$ ($x\in \mathbb{R}^N$), then \eqref{neumfxy} becomes
\begin{equation*}
\phi \left( u^{\prime }\right)(0)=\displaystyle e^{|u(0)-u(T)|^2} ( u(0)- u(T))=\phi \left( u^{\prime }\right)(T)
\end{equation*}
and in a similar manner can be written \eqref{bcondsigmafxy}.

\medskip
In the light of the above, we can assert that the significance of Corollary \ref{Fperiodic} occurs twofold toward the class of problems with periodic potential $F$.
On a hand, it covers known results, such as the solvability of periodic and Neumann problems  (see \cite[Theorem 2]{BrMa0}, \cite[Theorem 9.1]{BrMa} and Theorems 4.2 and 4.4 in \cite{JeSe1}) through a unitary approach and on the other hand it points out a wider class of boundary conditions for which such a result still holds true.
}
\end{example}

\medskip

\begin{corollary}\label{talp1}
If $\nabla_u F$ is bounded on  $[0,T] \times \mathbb{R}^N$ and
\begin{equation}\label{ALP1}
\lim_{|x|\to\infty}\int_0^TF(t,x)dt=-\infty,
\end{equation}
then problem \eqref{sysp}-\eqref{bcsysp} has at least one solution.
\end{corollary}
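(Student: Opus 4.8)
The plan is to derive this from Theorem \ref{minlem} by producing a $\delta>0$ for which \eqref{infeq} holds. First I would record two elementary sign facts. On one hand $\Psi\ge 0$ on $C$: since $\Phi$ is convex and $\nabla\Phi(0_{\mathbb{R}^N})=\phi(0_{\mathbb{R}^N})=0_{\mathbb{R}^N}$, the point $0_{\mathbb{R}^N}$ minimizes $\Phi$, so $\Phi(v')\ge\Phi(0_{\mathbb{R}^N})$ a.e. and $\Psi(v)=\int_0^T[\Phi(v')-\Phi(0_{\mathbb{R}^N})]\ge 0$. On the other hand $J\ge 0$ on $C$, because $0_{\mathbb{R}^N\times\mathbb{R}^N}\in\partial j(0_{\mathbb{R}^N\times\mathbb{R}^N})$ forces $j\ge j(0_{\mathbb{R}^N\times\mathbb{R}^N})=0$. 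Hence for every $v\in D(\Psi+J)$ one has ${\mathcal E}(v)=\Psi(v)+J(v)-\int_0^T F(t,v)\,dt\ge-\int_0^T F(t,v)\,dt$.

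Next I would split $v=\overline{v}+\tilde{v}$. Because $D(\Psi+J)\subset{\mathcal K}$, the oscillatory part is uniformly bounded, $\|\tilde{v}\|_\infty\le aT\sqrt{N}$, by \eqref{estvp}. Writing, for each $t\in[0,T]$,
\[
F(t,v(t))-F(t,\overline{v})=\int_0^1\langle\nabla_u F(t,\overline{v}+s\tilde{v}(t))\,|\,\tilde{v}(t)\rangle\,ds,
\]
and using that $|\nabla_u F|\le M$ on $[0,T]\times\mathbb{R}^N$ for some constant $M<\infty$ (the boundedness hypothesis), I obtain
\[
\Big|\int_0^T F(t,v)\,dt-\int_0^T F(t,\overline{v})\,dt\Big|\le MaT^2\sqrt{N},
\]
so that ${\mathcal E}(v)\ge-\int_0^T F(t,\overline{v})\,dt-MaT^2\sqrt{N}$ for all $v\in D(\Psi+J)$.

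Finally, \eqref{ALP1} gives $\int_0^T F(t,x)\,dt\to-\infty$ as $|x|\to\infty$, hence the above lower bound tends to $+\infty$ as $|\overline{v}|\to\infty$; choose $\delta>0$ such that $v\in D(\Psi+J)$ and $|\overline{v}|>\delta$ imply ${\mathcal E}(v)>0$. Since $0_{\mathbb{R}^N}\in D_\delta(\Psi+J)$ with $\Psi(0_{\mathbb{R}^N})=0$, $J(0_{\mathbb{R}^N})=j(0_{\mathbb{R}^N\times\mathbb{R}^N})=0$ and $\int_0^T F(t,0_{\mathbb{R}^N})\,dt=0$, one has ${\mathcal E}(0_{\mathbb{R}^N})=0$, so every $v\in D(\Psi+J)$ with $|\overline{v}|>\delta$ satisfies ${\mathcal E}(v)>0={\mathcal E}(0_{\mathbb{R}^N})\ge\inf_{D_\delta(\Psi+J)}{\mathcal E}$; together with the trivial inequality $\inf_{D_\delta(\Psi+J)}{\mathcal E}\ge\inf_{D(\Psi+J)}{\mathcal E}$ this yields \eqref{infeq}. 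Theorem \ref{minlem} then provides a minimizer $u\in D_\delta(\Psi+J)$ of ${\mathcal E}$ on $C$, which is a solution of \eqref{sysp}-\eqref{bcsysp}. I expect no serious obstacle: the only delicate point is the uniform estimate $\|\tilde{v}\|_\infty\le aT\sqrt{N}$, which is precisely where the singular structure of $\phi$ (the constraint $\|v'\|_{L^\infty}\le a$ built into ${\mathcal K}$) does the work, reducing coercivity to the finite-dimensional average $\overline{v}$, on which \eqref{ALP1} supplies it.
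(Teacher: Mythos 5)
Your proposal is correct and follows essentially the same route as the paper's proof: the same splitting $v=\overline{v}+\tilde{v}$ with the uniform bound \eqref{estvp}, the same mean-value estimate $\left|\int_0^T F(t,v)\,dt-\int_0^T F(t,\overline{v})\,dt\right|\le c_1T^2a\sqrt{N}$ from the boundedness of $\nabla_u F$, and then \eqref{ALP1} together with ${\mathcal E}(0_{\mathbb{R}^N})=0$ to verify \eqref{infeq} and invoke Theorem \ref{minlem}. The only cosmetic difference is that you justify $\Psi\ge 0$ and $J\ge 0$ explicitly, whereas the paper simply uses that $\Psi+J$ is bounded from below.
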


\begin{proof} Let $c_1>0$ be such that $|\nabla_uF|\leq c_1$. For $u\in {\mathcal K}$, using \eqref{estvp}, we obtain
\begin{equation}
\begin{array}{ll}\displaystyle \int_0^TF(t,u)dt&= \displaystyle \int_0^T\int_0^1\frac{d}{ds}F(t,\overline{u}+s\tilde{u}(t) )ds \, dt +\int_0^TF(t,\overline{u})dt \\
&= \displaystyle  \int_0^T\int_0^1\langle \nabla_u F(t, \overline{u}+s\tilde{u}(t)|\tilde{u}(t) \rangle ds \, dt +\int_0^TF(t,\overline{u})dt \label{incercare} \\
&\leq \displaystyle c_1T^2a\sqrt{N}+\int_0^TF(t,\overline{u})dt.
\end{array}
\end{equation}
Then, as $\Psi+J$ is bounded from below, say by $-c_2<0$, we have the estimate
$$\begin{array}{ll}\displaystyle {\mathcal E}(u)&=(\Psi + J)(u) + {\mathcal F}(u) \displaystyle \geq -c_2-\int_0^TF(t,u)dt\\
& \geq \displaystyle -c_2 -c_1T^2a\sqrt{N}-\int_0^TF(t,\overline{u})dt \quad (u\in D(\Psi + J)).
\end{array}$$
By virtue of \eqref{ALP1} there exists $\delta >0$ such that ${\mathcal E}(u) >0$ provided that $u\in D(\Psi + J)$ and $|\overline{u}|> \delta$. Since ${\mathcal E}(u)(0)=0$, we have that \eqref{infeq} is fulfilled and the conclusion follows from Theorem \ref{minlem}.
\end{proof}

\medskip
\begin{example}\label{secex}{\em (See p. 13-14 in \cite{MaWi} for the periodic classical scalar variant.)
Let $\rho >0$, $0<\beta <\pi$, $h\in C$ with $\overline{h}=0$ and consider the system
\begin{equation}\label{exsysALP}
-\left[ \phi(u^{\prime}) \right] ^{\prime}=\rho \left [ \sin (\beta-|u|) - \sin \beta \right ] \frac{u}{|u|} +h(t) \quad \mbox{in }[0,T].
\end{equation}
In this case,
$$F(t,u)=\rho \left [   \cos \left ( |u| - \beta \right ) - \cos \beta -\left ( \sin \beta \right ) |u| \right ] + \langle h(t) \, | \, u \rangle$$
and hence
$$\int_0^TF(t,x)dt=T\rho \left [   \cos \left ( |x| - \beta \right ) - \cos \beta -\left ( \sin \beta \right ) |x| \right ] \rightarrow -\infty \quad \mbox{ as }|x| \to \infty.$$
In addition, $\nabla _uF$ is bounded by $2 \rho + \|h \|_{\infty}$ and problem \eqref{exsysALP}-\eqref{bcsysp} has at least one solution by Corollary \ref{talp1}.

\medskip In the scalar case ($N=1$), for $\beta = 0$ and $\rho$, $h$ as above, one has the singular pendulum equation
$$\left[ \phi(u^{\prime}) \right] ^{\prime}=\rho  \sin u -h(t) \quad \mbox{in }[0,T]$$
and its solvability is covered by Corollary \ref{Fperiodic}.
}
\end{example}

\medskip
It is a natural question asking what happens when instead of \eqref{ALP1} the complementary condition occurs -- that is, the limit is $+\infty$ instead of $-\infty$. We will answer this in the next theorem, with the remark that the solution we will obtain will no longer be a minimum point for the energy ${\mathcal E}$, but one of saddle-point type. In addition, we will see that in that case additional conditions on $j$ are necessary, unlike the situation in Corollary \ref{talp1} when any condition of the type \eqref{bcsysp} is allowed.

\begin{proposition}\label{convK}
If $\{u_n\}\subset {\mathcal K}$ is a sequence such that $\{\overline{u}_n\}$ is bounded in $\mathbb{R}^N$, then it possesses a convergent subsequence.
\end{proposition}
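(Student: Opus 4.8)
The plan is to exploit the fact that every element of $\mathcal{K}$ has derivative bounded by $a$ in $L^{\infty}$, so the sequence $\{u_n\}$ is a family of uniformly Lipschitz functions; the only thing preventing immediate application of Arzel\`a--Ascoli is the lack of a uniform bound on the values $\{u_n(t)\}$ themselves, which is exactly what the hypothesis on $\{\overline{u}_n\}$ supplies. First I would write $u_n = \overline{u}_n + \tilde{u}_n$ and recall the estimate \eqref{estvp}, which gives $\|\tilde{u}_n\|_{\infty} \leq T\sqrt{N}\,\|u_n'\|_{L^{\infty}} \leq aT\sqrt{N}$ for every $n$. Combining this with the assumed bound $|\overline{u}_n| \leq \delta$ (for some $\delta>0$ independent of $n$) yields $\|u_n\|_{\infty} \leq \delta + aT\sqrt{N}$, so $\{u_n\}$ is uniformly bounded in $C$. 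Since in addition $\|u_n'\|_{L^{\infty}} \leq a$, the sequence $\{u_n\}$ is bounded in $W^{1,\infty}$.

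Next I would invoke the compactness of the embedding $W^{1,\infty} \hookrightarrow C$ (already used in the proofs of Theorem~\ref{l1poz} and Theorem~\ref{minlem}) to extract a subsequence, still denoted $\{u_n\}$, converging uniformly to some $u \in C$. Finally, since $\mathcal{K}$ is a closed subset of $C$ (established right after the definition of $\Psi$ in Section~\ref{sectiunea3}), the uniform limit $u$ belongs to $\mathcal{K}$; alternatively one checks directly that $u$ is Lipschitz with constant $a$, hence lies in $W^{1,\infty}$ with $\|u'\|_{L^{\infty}} \leq a$. This produces the desired convergent subsequence with limit in $\mathcal{K}$, completing the argument.

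There is really no serious obstacle here: the statement is a routine compactness fact, and the content is entirely in recognizing that \eqref{estvp} converts control of the averages $\{\overline{u}_n\}$ into control of the full sup-norms. The one point deserving a line of care is the passage to the limit staying inside $\mathcal{K}$ — but this is handled either by closedness of $\mathcal{K}$ in $C$ or by noting that uniform limits of $a$-Lipschitz functions are $a$-Lipschitz, so even that is immediate.
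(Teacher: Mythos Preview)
Your proof is correct and follows essentially the same route as the paper: use \eqref{estvp} together with the bound on $\{\overline{u}_n\}$ and $\|u_n'\|_{L^\infty}\le a$ to obtain boundedness in $W^{1,\infty}$, then invoke the compact embedding $W^{1,\infty}\hookrightarrow C$. Your final paragraph about the limit lying in $\mathcal{K}$ is extra --- the proposition only asserts existence of a convergent subsequence in $C$, and the paper does not address where the limit lands --- but it is correct and does no harm.
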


\begin{proof} From \eqref{estvp}  we have
$$\|u_n\|_{W^{1,\infty}} \leq|\overline{u}_n|+ (T \sqrt{N}+1)a$$
showing that $\{u_n\}$ is bounded in $W^{1, \infty}$. The conclusion follows by the compactness of the embedding $W^{1, \infty }\hookrightarrow C$.
\end{proof}

\begin{theorem}\label{talp2}
Assume that $j|_{d_N^1}\equiv 0$ and $j|_{D(j)}$ is bounded. If $\nabla_u F$ is bounded on  $[0,T] \times \mathbb{R}^N$ and
\begin{equation}\label{ALP2}
\lim_{|x|\to\infty}\int_0^TF(t,x)dt=+\infty,
\end{equation}
then problem \eqref{sysp}-\eqref{bcsysp} has at least one solution.
\end{theorem}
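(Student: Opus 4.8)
\textbf{Proof proposal for Theorem \ref{talp2}.}

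The plan is to apply the Saddle-Point Theorem from Szulkin's critical point theory \cite{Sz} to the energy functional ${\mathcal E}=\Psi+J+{\mathcal F}$, using the natural splitting of a function into its mean value and its oscillatory part. Any $v\in C$ decomposes as $v=\overline{v}+\tilde{v}$, so one works with the decomposition $C = \mathbb{R}^N \oplus \widetilde{C}$, where $\mathbb{R}^N$ is identified with the constants and $\widetilde{C}=\{w\in C \, : \, \overline{w}=0\}$. On the constants the geometry is governed by the behaviour of ${\mathcal F}$: since $j|_{d_N^1}\equiv 0$, every constant $c_0\in \mathbb{R}^N$ lies in $D(\Psi+J)$ with $(\Psi+J)(c_0)=0$, and ${\mathcal E}(c_0)=-\int_0^T F(t,c_0)\,dt \to -\infty$ as $|c_0|\to\infty$ by \eqref{ALP2}. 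On the complement $\widetilde{C}$ one needs a lower bound: for $v\in D(\Psi+J)$ with $\overline{v}=0$, the bound \eqref{estvp} gives $\|v\|_\infty \leq aT\sqrt{N}$, so $v$ ranges over a compact set, ${\mathcal F}$ is bounded there, $\Psi\geq 0$ and $J$ is bounded by hypothesis; hence ${\mathcal E}$ is bounded below on $D(\Psi+J)\cap\widetilde{C}$, say by $-R_0$. Choosing $\rho>0$ large enough that $-\int_0^T F(t,c_0)\,dt < -R_0 - \sup_{D(j)} j$ for all $|c_0|=\rho$ (possible by \eqref{ALP2}, and noting $\Psi$ vanishes on constants while $J(c_0)\le \sup_{D(j)} j$ — actually $J(c_0)=0$ here) gives the linking inequality $\max_{|c_0|=\rho}{\mathcal E}(c_0) < \inf_{\widetilde{C}\cap D(\Psi+J)}{\mathcal E}$, which is the geometric hypothesis of the Saddle-Point Theorem.

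Next I would verify the (PS)-condition. Let $\{u_n\}\subset D(\Psi+J)$ be a (PS)-sequence, so ${\mathcal E}(u_n)\to c$ and \eqref{PSseq} holds with $\varepsilon_n\to 0$. The functional values being bounded, write ${\mathcal E}(u_n)=\Psi(u_n)+J(u_n)-\int_0^T F(t,u_n)\,dt$; since $\Psi\geq 0$, $J\geq 0$, and $\int_0^T F(t,u_n)\,dt = \int_0^T F(t,\overline{u}_n)\,dt + \int_0^T\int_0^1 \langle \nabla_u F(t,\overline{u}_n+s\tilde u_n)\,|\,\tilde u_n\rangle\,ds\,dt$ with the last term bounded by $c_1 T^2 a\sqrt{N}$ (as in the proof of Corollary \ref{talp1}, using $\|\nabla_u F\|\le c_1$ and \eqref{estvp}), the boundedness of ${\mathcal E}(u_n)$ from above forces $\int_0^T F(t,\overline{u}_n)\,dt$ to be bounded above, which by \eqref{ALP2} yields that $\{\overline{u}_n\}$ is bounded in $\mathbb{R}^N$. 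Then Proposition \ref{convK} gives a subsequence of $\{u_n\}$ converging in $C$, so ${\mathcal E}$ satisfies the (PS)-condition.

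With the geometry and the (PS)-condition in hand, the Saddle-Point Theorem of \cite{Sz} provides a critical point $u\in D(\Psi+J)$ of ${\mathcal E}$ at the minimax level $c=\inf_{\gamma\in\Gamma}\max_{\overline{B}_\rho}{\mathcal E}\circ\gamma$ (over continuous maps $\gamma$ from the closed ball $\overline{B}_\rho\subset\mathbb{R}^N$ into $D(\Psi+J)$ agreeing with the identity on $\partial B_\rho$), and by Theorem \ref{critpoint} this $u$ solves \eqref{sysp}--\eqref{bcsysp}. The step I expect to require the most care is the (PS)-verification: one must convert the weak inequality \eqref{PSseq} plus the energy bound into a priori control on the means $\{\overline{u}_n\}$, and the subtlety is that the "derivative-type" information in \eqref{PSseq} is only a one-sided variational inequality, so one cannot simply test against $-\tilde u_n$ or $\pm e_i$; instead one extracts the needed bound purely from the two-sided control on the energy values together with the boundedness of $\nabla_u F$, which is exactly why the hypothesis that $\nabla_u F$ is bounded (and not merely that \eqref{ALP2} holds) is essential here. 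The boundedness of $j|_{D(j)}$ plays the complementary role of keeping ${\mathcal E}$ from blowing up in the $J$-term on $\widetilde{C}$, securing the lower bound on the complement.
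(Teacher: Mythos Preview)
Your overall strategy---Szulkin's Saddle-Point Theorem with the splitting $C=\mathbb{R}^N\oplus\widetilde{C}$, then Theorem \ref{critpoint}---is exactly the paper's approach, and your geometric set-up is fine. However, the (PS)-verification contains a genuine sign error that makes the argument fail as written.

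You argue: ``since $\Psi\geq 0$, $J\geq 0$, \ldots\ the boundedness of ${\mathcal E}(u_n)$ from above forces $\int_0^T F(t,\overline{u}_n)\,dt$ to be bounded above.'' But from $\Psi\ge 0$, $J\ge 0$ and ${\mathcal E}(u_n)\le M$ one only gets $-\int_0^T F(t,u_n)\,dt\le M$, i.e.\ a \emph{lower} bound on $\int_0^T F(t,u_n)\,dt$, hence a lower bound on $\int_0^T F(t,\overline u_n)\,dt$. That is useless for invoking \eqref{ALP2}. What is needed is the opposite direction: use that $\Psi$ is bounded \emph{above} (since $\Phi$ is continuous on the compact $\overline{B}_a$) and, crucially, that $J(u_n)=j(u_n(0),u_n(T))$ is bounded \emph{above} by the hypothesis that $j|_{D(j)}$ is bounded. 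Together with ${\mathcal E}(u_n)\ge -M$ this yields
\[
\int_0^T F(t,\overline u_n)\,dt = \Psi(u_n)+J(u_n)-{\mathcal E}(u_n)-\int_0^T\!\!\big[F(t,u_n)-F(t,\overline u_n)\big]\,dt \le k_3,
\]
and now \eqref{ALP2} gives the boundedness of $\{\overline u_n\}$. In other words, you have placed the hypothesis ``$j|_{D(j)}$ bounded'' in the wrong spot: it is \emph{not} needed for the lower bound on $\widetilde{C}$ (there $J\ge 0$ already suffices), but it is \emph{essential} in the (PS)-step to control $J(u_n)$ from above. Once you flip these inequalities the proof goes through and coincides with the paper's.
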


\begin{proof} Using Saddle Point Theorem \cite[Theorem 3.5]{Sz} we show that ${\mathcal E}$ has a critical point and then the conclusion will follow by Theorem \ref{critpoint}. With this aim, let us split $C=\mathbb{R}^N\oplus\widetilde{C}$, where $\widetilde{C}=\{u\in C \, : \, \overline{u}=0_{\mathbb{R}^N} \}.$ Since for $u=x\in \mathbb{R}^N$, one has $J(x)=0$, from \eqref{ALP2} it follows
\begin{equation}\label{spt1}
{\mathcal E}(x)=-\int_0^TF(t,x)\, dt \to -\infty , \quad \mbox{ as } |x| \to \infty.
\end{equation}
Since $\nabla_u F$ is bounded, proceeding exactly as in the proof of Corollary
\ref{talp1}, we obtain the existence of a constant $k_1\in \mathbb{R}$ with
\begin{equation*}
{\mathcal E}(u) \geq k_1-\int_0^TF(t, \overline{u}) \, dt \qquad  ( u \in D(\Psi+J)),
\end{equation*}
showing that
\begin{equation*}
{\mathcal E}(u)\geq k_1-\int_0^TF(t, 0_{\mathbb{R}^N}) \, dt =:k_2 \in \mathbb{R}\qquad ( u \in D(\Psi+J) \cap\widetilde{C}).
\end{equation*}
From \eqref{spt1} there are constants $\rho >0$ and $\alpha_1 < k_2$ so that
\begin{equation}\label{gspt}
{\mathcal E}|_{\{ x\in \mathbb{R}^N \, :  \, |x|=\rho \}} \leq \alpha _1 \quad \mbox{and} \quad {\mathcal E}|_{\widetilde{C}} \geq k_2.
\end{equation}
\medskip
It remains to check that ${\mathcal E}$ satisfies the (PS)-condition. Let $\{ u_n \} \subset D(\Psi + J)$ be a (PS)-sequence. First, observe that again by the boundedness of $\nabla_u F$  we can infer (see \eqref{incercare}) that the sequence
$$\left \{ \int_0^T \left [ F(t,u_n) - F(t, \overline{u}_n)\right ] \, dt \right \}$$
is bounded. Next, since $\{ {\mathcal E}(u_n) \}$ and $\{ J(u_n) \}$ are also bounded, from the writing
$${\mathcal E}(u_n)=\int_0^T \left [\Phi(u_n^{\prime}) -\Phi (0) \right ] +J(u_n)-\int_0^T  F(t, \overline{u}_n)\, dt- \int_0^T \left [ F(t,u_n) - F(t, \overline{u}_n)\right ] \, dt $$
it follows that there is a constant $k_3 \in \mathbb{R}$ such that
\begin{equation}\label{marginint}
\int_0^T  F(t, \overline{u}_n)\, dt \leq k_3 \qquad (n\in \mathbb{N}).
\end{equation}
Then, by \eqref{ALP2} the sequence $\{ \overline{u}_n \}$  is bounded and Proposition \ref{convK} ensures that $\{ u_n \}$ has a convergent subsequence in $C$. Consequently, ${\mathcal E}$ satisfies the (PS)-condition and
the proof is complete.

\end{proof}

\medskip
\begin{example}\label{triex}{\em
With $f$ and $g$ as in Example \ref{primex}, function $j$ given by \eqref{partjS} -- with $0 \leq \sigma < \infty$, satisfies the requirements of Theorem \ref{talp2}.
We get that if  $\rho$, $h$ are as in Example \ref{secex} and $-\pi < \beta <0$, then system \eqref{exsysALP}  subject to one of the boundary conditions
\eqref{peri}, \eqref{neumfxy} or \eqref{bcondsigmafxy} -- with any $0 < \sigma <Ta$, has at least one solution.
}
\end{example}

\medskip
\begin{remark}\label{remarkALP}{\em Theorem \ref{talp2} and Corollary \ref{talp1} are results of Ahmad-Lazer-Paul type \cite{ALP} and are known for the periodic and Neumann boundary conditions \cite{JeSe1}. It is also worth noting that Theorem \ref{talp2} does more than answer an open problem regarding existence of saddle-point solutions for the periodic problem raised in \cite[Remark 7.4]{BrMa}. Namely, it highlights through $j$ a whole class of boundary problems which under  semi-coerciveness condition  \eqref{ALP2} admit solutions that appear as  saddle-points of the energy ${\mathcal E}.$
}
\end{remark}

If in Theorem \ref{talp2} we require coerciveness instead of semi-coerciveness of $F$, then we can give up the bounding of $\nabla _uF$. In this respect, the following theorem removes the hypothesis of boundedness of $\nabla _uF$ from Corollary \ref{talp1} and Theorem \ref{talp2}.

\begin{theorem}\label{talpcoerc} If either
\begin{equation}\label{lessc}
\limsup_{|x| \to \infty}F(t,x)<\min \Phi-\Phi(0) \quad \mbox{ uniformly with }t\in [0,T],
\end{equation}
or $j|_{d_N^1}\equiv 0$,  $j|_{D(j)}$ is bounded and
\begin{equation}\label{ALP2coer}
\lim_{|x|\to\infty}F(t,x)dt=+\infty \quad \mbox{ uniformly with }t\in [0,T],
\end{equation}
then problem \eqref{sysp}-\eqref{bcsysp} has at least one solution.
\end{theorem}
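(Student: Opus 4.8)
The plan is to handle the two alternative hypotheses separately, in each case reducing to a minimization or to an application of Theorem~\ref{minlem} that has already been established, plus Theorem~\ref{critpoint} to conclude that the minimizer/critical point is a genuine solution. The common thread is the same decomposition $v=\overline v+\tilde v$ used throughout Section~\ref{sectiunea4}, together with the uniform bound $\|\tilde v\|_\infty\le T\sqrt N\,a$ from \eqref{estvp} valid for every $v\in\mathcal K$. The new feature, compared with Corollary~\ref{talp1} and Theorem~\ref{talp2}, is that we no longer control $\nabla_uF$, so we cannot absorb the oscillatory part $\tilde v$ by a linear-in-$\tilde v$ estimate; instead the coerciveness in \eqref{lessc} or \eqref{ALP2coer} is pointwise in the state variable and must be played directly against the term $\int_0^T[\Phi(v')-\Phi(0)]$, which is itself bounded below by $T(\min\Phi-\Phi(0))$.

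First I would treat hypothesis \eqref{lessc}. Pick $\varepsilon>0$ and $R>0$ so that $F(t,x)\le \min\Phi-\Phi(0)-\varepsilon$ for all $t\in[0,T]$ and $|x|\ge R$. For $v\in D(\Psi+J)$ write $x=v(t)$; since $\|\tilde v\|_\infty\le T\sqrt N\,a=:c_0$, the condition $|\overline v|\ge R+c_0$ forces $|v(t)|\ge R$ for every $t$, whence
$$
{\mathcal F}(v)=-\int_0^T F(t,v(t))\,dt\ge T\big(\Phi(0)-\min\Phi\big)+\varepsilon T.
$$
On the other hand $\Psi(v)=\int_0^T[\Phi(v')-\Phi(0)]\ge T(\min\Phi-\Phi(0))$ and $J(v)\ge 0$, so ${\mathcal E}(v)\ge \varepsilon T>0$ whenever $|\overline v|\ge R+c_0$. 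Since ${\mathcal E}(0)=0$, this shows that the infimum of ${\mathcal E}$ over $D(\Psi+J)$ is attained (if at all) on the set where $|\overline v|\le R+c_0$, i.e. condition \eqref{infeq} of Theorem~\ref{minlem} holds with $\delta=R+c_0$, and that theorem gives a solution.

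Next I would treat the alternative with $j|_{d_N^1}\equiv0$, $j|_{D(j)}$ bounded and \eqref{ALP2coer}. Here the energy is no longer bounded below on all of $C$ (for $v=x\in\mathbb R^N$ we get ${\mathcal E}(x)=-\int_0^T F(t,x)\,dt\to-\infty$), so as in Theorem~\ref{talp2} I would invoke the Saddle Point Theorem \cite[Theorem 3.5]{Sz} with the splitting $C=\mathbb R^N\oplus\widetilde C$. On $\widetilde C$: for $v\in D(\Psi+J)\cap\widetilde C$ one has $\overline v=0$, so $|v(t)|\le c_0$; using that $F$ is continuous on the compact set $[0,T]\times\overline B_{c_0}$, that $\Psi\ge T(\min\Phi-\Phi(0))$ and that $0\le J(v)$ (indeed $J$ is bounded on $D(J)$), we get a lower bound ${\mathcal E}|_{\widetilde C\cap D(\Psi+J)}\ge k_2\in\mathbb R$. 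On $\mathbb R^N$: for $v=x$ with $|x|$ large, $\Psi(x)=0$, $J(x)=j(x,x)=0$, and \eqref{ALP2coer} gives ${\mathcal E}(x)=-\int_0^TF(t,x)\,dt\to-\infty$, so there are $\rho>0$ and $\alpha_1<k_2$ with ${\mathcal E}|_{\{|x|=\rho\}}\le\alpha_1$. This is the linking geometry. The remaining point is the (PS)-condition: given a (PS)-sequence $\{u_n\}\subset D(\Psi+J)$, boundedness of $\{{\mathcal E}(u_n)\}$ and of $\{J(u_n)\}$ (since $j$ is bounded on $D(j)$), together with $\Psi(u_n)\ge T(\min\Phi-\Phi(0))$, forces $\int_0^T F(t,u_n)\,dt$ to be bounded above; but $|u_n(t)|\ge|\overline u_n|-c_0$, so if $|\overline u_n|\to\infty$ then by \eqref{ALP2coer} $\int_0^T F(t,u_n)\,dt\to+\infty$, a contradiction. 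Hence $\{\overline u_n\}$ is bounded, and Proposition~\ref{convK} yields a convergent subsequence. Thus ${\mathcal E}$ satisfies (PS), the Saddle Point Theorem produces a critical point, and Theorem~\ref{critpoint} turns it into a solution of \eqref{sysp}--\eqref{bcsysp}.

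**The main obstacle** I anticipate is purely bookkeeping rather than conceptual: one must be careful that in the second case the estimate $|u_n(t)|\ge|\overline u_n|-c_0$ is used in the right direction (coerciveness \eqref{ALP2coer} is a statement about $|x|\to\infty$, so we need $|u_n(t)|$ large \emph{uniformly in }$t$, which is exactly what $\|\tilde u_n\|_\infty\le c_0$ delivers), and that the lower bound on $\Psi$ — the constant $\min\Phi-\Phi(0)$, which is finite and $\le 0$ by $(H_\Phi)$ — is what makes the argument go through without any control on $\nabla_uF$. In the first case the only subtlety is that one needs $F(t,v(t))\le\min\Phi-\Phi(0)-\varepsilon$ for \emph{all} $t$, which again requires $|v(t)|\ge R$ for all $t$ and hence $|\overline v|\ge R+c_0$; the rest is the direct-method compactness already packaged in Theorem~\ref{minlem}.
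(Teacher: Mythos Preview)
Your proposal is correct and follows essentially the same route as the paper: in the first case you compare $\Psi$ against the pointwise bound on $F$ via \eqref{estvp} and invoke Theorem~\ref{minlem} with $\delta=R+T\sqrt{N}\,a$, and in the second case you set up the saddle-point geometry on $C=\mathbb{R}^N\oplus\widetilde C$ and verify the (PS)-condition by showing $\{\overline u_n\}$ is bounded, exactly as the paper does. Your justification of the lower bound for ${\mathcal E}$ on $\widetilde C\cap D(\Psi+J)$ (via continuity of $F$ on the compact set $[0,T]\times\overline B_{c_0}$) is in fact more explicit than the paper, which simply refers back to the argument of Theorem~\ref{talp2}; since that earlier argument used the boundedness of $\nabla_uF$, your direct bound is the right way to fill that step.
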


\begin{proof} If \eqref{lessc} holds true, then there are numbers $\varepsilon >0 <\rho_{\varepsilon}$ so that if $|x| > \rho_{\varepsilon}$, then
$$ F(t,x)<\min \Phi-\Phi(0)-\varepsilon \quad \mbox{ for all }t\in [0,T].$$
If $u\in D(\Psi)$ is with $|\overline{u}|>\rho_{\varepsilon}+Ta \sqrt{N}$, from \eqref{estvp} it follows $|u(t)|> \rho_{\varepsilon}$ and, hence
$$F(t,u(t))<\min \Phi-\Phi(0)-\varepsilon \quad \mbox{ for all }t\in [0,T].$$
Then, using the estimation (recall, $J(u) \geq 0$):
$${\mathcal E}(u) \geq T \left (\min \Phi-\Phi(0) \right ) -\int_0^T F(t,u(t))dt>T \varepsilon$$
and ${\mathcal E}(0)=0$, we get the conclusion by Theorem \ref{minlem} which applies with $\delta=\rho_{\varepsilon}+Ta \sqrt{N}.$

\smallskip
For the remaining case, observe that \eqref{ALP2coer} implies \eqref{ALP2} and similarly to the proof of Theorem \ref{talp2}, we can show that ${\mathcal E}$ has the geometry required by Saddle Point Theorem (see \eqref {gspt}).
To see that ${\mathcal E}$ satisfies (PS)-condition, let $\{ u_n \} \subset D(\Psi + J)$ be a (PS)-sequence. We claim that $\{ |\overline{u}_n| \}$ is bounded. Indeed, otherwise, we may assume -- going if necessary to a subsequence, that $|\overline{u}_n| \to \infty$, as $n \to \infty$. Using \eqref{estvp} and \eqref{ALP2coer}, we deduce that
$$F(t,u_n(t)) \to +\infty \quad \mbox{ uniformly with }t\in [0,T], \mbox{ as }n \to \infty.$$
This implies
$$\int_0^TF(t,u_n(t))dt \to +\infty \quad \mbox{ as }n \to \infty $$
and, as ${\mathcal E}(u_n)$, $\Psi(u_n)$, $J(u_n)$ are bounded, from the writing
$${\mathcal E}(u_n)=\Psi(u_n)+J(u_n)-\int_0^TF(t,u_n(t))dt$$
we get a contradiction. Thus, $\{ \overline{u}_n \}$  is bounded and by Proposition \ref{convK}, $\{ u_n \}$ has a convergent subsequence in $C$. Therefore, ${\mathcal E}$ satisfies the (PS)-condition.
Now, the conclusion follows by Saddle Point Theorem and Theorem \ref{minlem}.
\end{proof}

\section{Appendix}

We claim that if $z=(z_1,z_2)\in \mathbb{R}^N \times \mathbb{R}^N$ is such that $|z_1-z_2|=\sigma$, then
$$N_{{\overline D}_{\sigma}}(z)=\left \{ s (z_1-z_2, z_2-z_1) \, : \, s\geq 0 \right \}. \leqno{(A.1)}$$
Indeed, first observe that if $x=(x_1,x_2)\in {\overline D}_{\sigma}$, then
$$\langle \!  \langle  (z_1-z_2, z_2-z_1) | x-z \rangle \! \rangle =\langle z_1-z_2 | x_1-z_1 \rangle+ \langle z_2-z_1 | x_2-z_2 \rangle \leq 0,$$
means
$$\langle z_1-z_2 | x_1-z_1 - (x_2-z_2)\rangle \leq 0,$$
or
$$\langle z_1-z_2 |x_1-x_2 \rangle \leq \langle z_1-z_2 |z_1-z_2 \rangle=\sigma ^2,$$
which clearly is true, because
$$\langle z_1-z_2 |x_1-x_2 \rangle \leq |z_1-z_2| \, |x_1-x_2|\leq \sigma ^2.$$
Thus, we have
$$\langle \!  \langle s (z_1-z_2, z_2-z_1) | x-z \rangle \! \rangle\leq 0 \quad \mbox{ for all }x \in {\overline D}_{\sigma}\mbox{ and }s \geq 0,$$
meaning that $N_{{\overline D}_{\sigma}}(z) \supset \left \{ s (z_1-z_2, z_2-z_1) \, : \, s\geq 0 \right \}$ and it remains to show the reverse inclusion. In this respect, since if $(\xi_1,\xi_2)\in N_{{\overline D}_{\sigma}}(z)$, then
$$\langle \xi_1 | x_1-z_1 \rangle+ \langle \xi_2 | x_2-z_2 \rangle=\langle \!  \langle  (\xi_1, \xi_2) | x-z \rangle \! \rangle \leq 0 \quad \mbox{ for all } x=(x_1,x_2)\in {\overline D}_{\sigma}$$
and taking $x_j=z_j+y$ ($j=1,2$), it holds $\langle \xi_1 + \xi_2| y \rangle\leq 0$ for any $y\in \mathbb{R}^N$, we obtain that necessarily $(\xi_1,\xi_2)=(\xi, -\xi)$ with some $\xi \in \mathbb{R}^N$ satisfying
$$\langle \xi | x_1-z_1 - x_2+z_2 \rangle \leq 0 \quad \mbox{ for all } x=(x_1,x_2)\in {\overline D}_{\sigma}. \leqno{(A.2)}$$
If $\xi \neq 0_{\mathbb{R}^N}$, taking $x_1=\xi |z_1-z_2| / |\xi|$ and $x_2=0_{\mathbb{R}^N}$ in $(A.2)$, we get
$$|\xi| \, |z_1-z_2|\leq \langle \xi | z_1-z_2 \rangle,$$
which implies that equality holds in Cauchy-Schwartz inequality (i.e. $|\langle \xi | z_1-z_2 \rangle|=|\xi| \, |z_1-z_2|$).
But this means that $\xi=\tau (z_1-z_2)$ with some $\tau \neq 0$ and, using again $(A.2)$ with $x_1=x_2=0_{\mathbb{R}^N}$, we infer that $\tau >0$. The proof of $(A.1)$ is accomplished.

\vspace{0.5cm}
\noindent {\bf Acknowledgement}. I am grateful to Radu Precup for useful discussions which improved the presentation of the paper.
\bigskip

\bigskip

\end{document}